\newtheorem{thm}{Theorem}[section]
\newtheorem{prop}[thm]{Proposition}
\theoremstyle{definition} 
\newtheorem{defn}{Definition}[section]
\newtheorem{exmp}{Example}[section]
\newcommand{\CL}[1]{\mathcal{#1}}
\newcommand{\RM}[1]{\mathrm{#1}}
\newcommand{\BF}[1]{\mathbf{#1}}
\newcommand{\BB}[1]{\mathbb{#1}}
\newcommand{\ten}[1]{\underline{\mathbf{#1}}}    
\newcommand{\unfoldto}[1]{<#1>}  
\newcommand{\modcon}{\times^1}  
\newcommand{\ptrace}{Tr}  
\newcommand{\skron}{\,|\!\!\otimes\!\!|\,}   
\newcommand{\Trps}{\top}  
\newcommand\myatop[2]{\genfrac{}{}{0pt}{}{#1}{#2}}
\begin{document}


\title{Fundamental Tensor Operations for Large-Scale Data Analysis in Tensor Train Formats}

\author[a]{Namgil Lee}
\affil[a]{Laboratory for Advanced Brain Signal Processing, 
	RIKEN Brain Science Institute, Wako-shi, Saitama, 3510198, Japan}
\author[a]{Andrzej Cichocki}

\date{}

\maketitle


\begin{abstract} 

We discuss extended definitions of linear and multilinear operations 
such as Kronecker, Hadamard, and contracted products, and establish links between 
them for tensor calculus. Then we introduce effective low-rank tensor 
approximation techniques including Candecomp/Parafac (CP), 
Tucker, and tensor train (TT) decompositions
with a number of mathematical and graphical representations. 
We also provide a brief review of mathematical properties of  the 
TT decomposition as a low-rank approximation technique. 
With the aim of breaking the curse-of-dimensionality 
in large-scale numerical analysis, 
we describe basic operations on large-scale vectors, matrices, and 
high-order tensors represented by TT decomposition. 
The proposed representations can be used for describing numerical methods 
based on TT decomposition for solving large-scale optimization problems 
such as systems of linear equations and symmetric eigenvalue problems. 

\vspace{1pc}
{\raggedright KEY WORDS: tensor networks; tensor train; matrix product state; 
matrix product operator; generalized Tucker model; 
strong Kronecker product; contracted product; multilinear operator; 
numerical analysis; tensor calculus}

\end{abstract} 



\section{Introduction}

Multi-dimensional or multi-way data is prevalent nowadays, which can be represented by tensors. 
An $N$th-order tensor is a multi-way array of size 
$I_1\times I_2\times \cdots \times I_N$, 
where the $n$th dimension or mode is of size $I_n$.
For example, a tensor can be induced by the discretization of a multivariate 
function \cite{Kho2012}. Given a multivariate function 
$f(x_1,\ldots,x_N)$ defined on 
a domain $[0, 1]^N$, we can get a tensor with entries containing the  
function values at grid points. 
For another example, we can obtain tensors based on observed data 
\cite{Cic2009,KolBa2009}. 
We can collect and integrate measurements from different modalities
by neuroimaging technologies such as functional magnetic resonance imaging (fMRI)
and electroencephalography (EEG): 
subjects, time, frequency, electrodes, task conditions, trials, and so on. 
Furthermore, high-order tensors can be created by a process called 
tensorization or quantization \cite{Kho2011}, 
by which a large-scale vectors and matrices are reshaped into higher-order 
tensors.

However, it is impossible to store a high-order tensor because 
the number of entries, $I^N$ when $I=I_1=I_2=\cdots=I_N$, 
grows exponentially as the order $N$ increases. This is called 
the ``curse-of-dimensionality''. Even for $I=2$, with $N=50$ we 
obtain $2^{50}\approx 10^{15}$ entries. 
Such a huge storage and computational costs required 
for high-dimensional problems prohibit the use of standard numerical 
algorithms. To make high-dimensional problems tractable, there have been
developed approximation 
methods including sparse grids \cite{Bun2004,Smo1963} and 
low-rank tensor approximations \cite{Gra2013,Hac2012,Kho2012}. 
In this paper, we focus on the latter approach, where computational 
operations are performed on tensor formats, i.e.,
low-parametric representations of tensors. 

In this paper, we consider several tensor decompositions (formats), especially 
the tensor train (TT) decomposition, which 
is one of the simplest tensor networks developed 
with the aim of overcoming the curse-of-dimensionality. 
Extensive overviews of the modern low-rank tensor approximation 
techniques are presented in \cite{Gra2013,Kho2012}. 
The TT decomposition is equivalent to the matrix product states (MPS) 
for open boundary conditions proposed in computational physics, 
and it has taken a key role in density matrix renormalization group (DMRG) 
methods for simulating quantum many-body systems \cite{Sch2011,Whi1993}. 
It was later re-discovered in numerical analysis community
\cite{Holtz2012,Ose2011,Ose2009}. 
The TT-based numerical algorithms can accomplish 
algorithmic stability and adaptive determination of ranks 
by employing the singular value decomposition (SVD)
\cite{Ose2011}. 
Its scope of application is quickly expanding 
for addressing high-dimensional problems such as 
multi-dimensional integrals, stochastic and parametric PDEs,  
computational finance, and machine learning \cite{Gra2013}. 
On the other hand, a comprehensive survey on traditional 
low-rank tensor approximation techniques such as Candecomp/Parafac (CP) and Tucker 
decompositions is presented in \cite{delath2009,KolBa2009}. 

Despite the large interest in high-order tensors 
in TT format, mathematical representations of the TT 
tensors are usually limited to the representations 
based on scalar operations on matrices and vectors, 
which leads to complex and tedious index notation 
in the tensor calculus. 
For example, a TT tensor is defined by each entry represented as
products of matrices \cite{Holtz2012,Ose2011}. 
On the other hand, representations of traditional 
low-rank tensor decompositions have been developed based on 
multilinear operations such as the Kronecker product, 
Khatri-Rao product, Hadamard product, and mode-$n$ multilinear product 
\cite{Cic2009,KolBa2009}, which enables
coordinate-free notation. 
Through the utilization of the multilinear operations,  
the traditional tensor decompositions expanded the area of 
application to chemometrics, signal processing, 
numerical linear algebra, computer vision, 
data mining, graph analysis, and 
neuroscience \cite{KolBa2009}.


In this work, we develop extended definitions of multilinear 
operations on tensors. 
Based on the tensor operations, we provide 
a number of new and useful representations of the TT decomposition. 
We also provide graphical representations of TT decompositions, 
motivated by \cite{Holtz2012}, 
which are helpful in understanding the underlying 
principles and TT-based numerical algorithms. 
Based on the TT representations of 
large-scale vectors and matrices, 
we show that the basic numerical operations 
such as the addition, contraction, matrix-by-vector product, and 
quadratic form are conveniently described by the proposed 
representations.  
We demonstrate the usefulness of the 
proposed tensor operations in tensor calculus 
by giving a proof of orthonormality of the so-called frame matrices. 
Moreover, we derive explicit representations of localized 
linear maps in TT format that have been implicitly presented in matrix forms 
in the literature in the context of 
alternating linear scheme (ALS) for solving 
various optimization problems. 
The suggested mathematical 
operations and TT representations can be 
further applied to describing TT-based numerical methods 
such as the solutions to large-scale systems of linear equations 
and eigenvalue problems \cite{Holtz2012}. 

This paper is organized as follows. In Section 2, 
we introduce notations for tensors and 
definitions for tensor operations. 
In Section 3, we provide the mathematical and 
graphical representations of the 
TT decomposition. We also review mathematical properties of
the TT decomposition as a low-rank approximation. 
In Section 4, we describe basic numerical operations on 
tensors represented by TT decomposition such as the addition, Hadamard product, 
matrix-by-vector multiplication, and quadratic form in terms of the 
multilinear operations and TT representations. 
Discussion and conclusions are given in Section 5. 




\section{Notations for tensors and tensor operations}

The notations in this paper follow the convention provided by \cite{Cic2009,KolBa2009}. 
Table \ref{Table:notation_ten} summarizes the notations for tensors. 
Scalars, vectors, and matrices are denoted by lowercase, lowercase bold, and 
uppercase bold letters $x$, $\BF{x}$, and $\BF{X}$, respectively. 
Tensors are denoted by underlined uppercase bold letters $\ten{X}$. 
The $(i_1,i_2,\ldots,i_N)$th entry of a tensor $\ten{X}$ of 
size $I_1\times I_2\times\cdots\times I_N$ is denoted by $x_{i_1,i_2,\ldots,i_N}$
or $\ten{X}(i_1,i_2,\ldots,i_N)$. 
A subtensor of $\ten{X}$ obtained by fixing the indices $i_3,i_4,\ldots,i_N$ is denoted by
$\BF{X}_{:,:,i_3,i_4,\ldots,i_N}$ or $\ten{X}(:,:,i_3,i_4,\ldots,i_N)$. 
We may omit `:' as $\BF{X}_{i_3,i_4,\ldots,i_N}$ if the omitted indices are clear to readers. 
We define the multi-index by 
	$
	\overline{i_1i_2\cdots i_N} = i_N + (i_{N-1}-1)I_N
	+ \cdots + (i_1-1)I_2I_3\cdots I_N
	$
for the indices $i_n=1,2,\ldots,I_n,$ $n=1,2,\ldots,N$. By using this notation, we 
can write an entry of a  Kronecker product as 
$(\BF{a}\otimes \BF{b})_{\overline{ij}} = a_i b_j$. 

Moreover, it is important to note that in this paper the vectorization 
and matricization are defined in accordance with 
the multi-index notation. That is, for a tensor 
$\ten{X}\in\BB{R}^{I_1\times I_2\times\cdots\times I_N}$, 
let $\text{vec}( \ten{X} ) \in\BB{R}^{I_1I_2\cdots I_N}$, 
$\BF{X}_{(n)}\in\BB{R}^{I_n\times I_1I_2\cdots I_{n-1}I_{n+1}\cdots I_N}$, and 
$\BF{X}_{\unfoldto{n}}\in\BB{R}^{I_1I_2\cdots I_n\times I_{n+1}\cdots I_N}$ denote 
the vectorization, mode-$n$ matricization, and mode-$(1,2,\ldots,n)$ matricization of 
$\ten{X}$,
whose entries are defined by 
	\begin{equation} \label{def_matricization}
	\begin{split}
	\left( \text{vec} ( \ten{X} \right) )_{\overline{i_1i_2\cdots i_N} }
	= 
	\left( \BF{X}_{(n)} \right)_{i_n,\overline{i_1i_2\cdots i_{n-1}i_{n+1}\cdots i_N}}
	= 
	\left( \BF{X}_{\unfoldto{n}} \right)_{\overline{i_1i_2\cdots i_n},\overline{i_{n+1}\cdots i_N}}
	= 
	\ten{X}(i_1,i_2,\ldots,i_N)
	\end{split}
	\end{equation}
for $n=1,\ldots,N$.

\begin{table} 
\centering
\caption{\label{Table:notation_ten}Notations for tensors}
\vspace{.5pc}
\begin{tabular}{ll}
\hline\\[-0.8pc]
Notation & Description\\
\hline\\[-0.8pc]
$\ten{X}\in\BB{R}^{I_1\times I_2\times\cdots\times I_N}$
	& $N$th-order tensor of size $I_1\times I_2\times\cdots\times I_N$\\
$x,\BF{x},\BF{X}$ 
	& scalar, vector, and matrix\\
$x_{i_1,i_2,\ldots,i_N}$, $\ten{X}(i_1,i_2,\ldots,i_N)$
	& $(i_1,i_2,\ldots,i_N)$th entry of $\ten{X}$\\
$\BF{x}_{:,i_2,i_3,\ldots,i_N}$, $\ten{X}(:,i_2,i_3,\ldots,i_N)$
	& mode-1 fiber of $\ten{X}$\\
$\BF{X}_{:,:,i_3,i_4,\ldots,i_N}$, $\ten{X}(:,:,i_3,i_4,\ldots,i_N)$
	& frontal slice of $\ten{X}$\\
$\BF{X}_{(n)}\in\BB{R}^{I_n\times I_1I_2\cdots I_{n-1}I_{n+1}\cdots I_N}$
	& mode-$n$ unfolding of $\ten{X}$\\
$\BF{X}_{\unfoldto{n}}\in\BB{R}^{I_1I_2\cdots I_n\times I_{n+1}\cdots I_N}$ 
	& mode-$(1,2,\ldots,n)$ unfolding of $\ten{X}$\\
$\ten{G}, \ten{G}^{(n)}, \ten{X}^{(n)}, \ten{A}^{(n)}$ 
	& core tensors and factor matrices/tensors\\
$R$, $R_n$  & ranks\\
$\overline{i_1i_2\cdots i_N}$  
	& multi-index, $i_N + (i_{N-1}-1)I_N + \cdots + (i_1-1)I_2I_3\cdots I_N$ \\
\hline
\end{tabular}
\end{table}

Table \ref{Table:notation_op} summarizes the notations 
and definitions for basic tensor operations used in this paper. 

\begin{table} 
\centering
\caption{\label{Table:notation_op}
Notations and definitions for basic tensor operations}
\vspace{.5pc}
\begin{tabular}{p{0.2\textwidth}p{0.75\textwidth}}
\hline\\[-0.8pc]
Notation & Description \\
\hline\\[-0.8pc]
$\ten{C}=\ten{A}\otimes\ten{B}$ 
	& Kronecker product of $\ten{A}\in\BB{R}^{I_1\times\cdots\times I_N}$ and 
	    $\ten{B}\in\BB{R}^{J_1\times\cdots\times J_N}$ yields  
	a tensor $\ten{C}$ of size $I_1J_1\times\cdots\times I_NJ_N$ with entries 
	$\ten{C}(\overline{i_1j_1},\ldots,\overline{i_Nj_N}) = \ten{A}(i_1,\ldots,i_N)\ten{B}(j_1,\ldots,j_N)$ \\
$\ten{C}=\ten{A}\circledast\ten{B}$ 
	& Hadamard (elementwise) product of $\ten{A}\in\BB{R}^{I_1\times\cdots\times I_N}$ and 
	    $\ten{B}\in\BB{R}^{I_1\times\cdots\times I_N}$ 
	yields a tensor $\ten{C}$ of size $I_1\times\cdots\times I_N$ with entries 
	$\ten{C}(i_1,\ldots,i_N) = \ten{A}(i_1,\ldots,i_N)\ten{B}(i_1,\ldots,i_N)$\\
$\ten{C}=\ten{A}\circ\ten{B}$ 
	& Outer product of $\ten{A}\in\BB{R}^{I_1\times\cdots\times I_M}$ and 
	$\ten{B}\in\BB{R}^{J_1\times\cdots\times J_N}$ yields 
	a tensor $\ten{C}$ of size $I_1\times\cdots\times I_M\times J_1\times\cdots \times J_N$ with entries 
	$\ten{C}(i_1,\ldots,i_M,j_1,\ldots,j_N) = \ten{A}(i_1,\ldots,i_M)\ten{B}(j_1,\ldots,j_N)$\\
$\ten{C}=\ten{A}\oplus\ten{B}$ 
	& Direct sum of $\ten{A}\in\BB{R}^{I_1\times\cdots\times I_N}$ and 
	$\ten{B}\in\BB{R}^{J_1\times\cdots\times J_N}$ yields
	a tensor $\ten{C}$ of size $(I_1+J_1)\times\cdots\times (I_N+J_N)$ with entries
	$\ten{C}(k_1,\ldots,k_N) = \ten{A}(k_1,\ldots,k_N)$ if $1\leq k_n \leq I_n$ $\forall n$, 
	$\ten{C}(k_1,\ldots,k_N) = \ten{B}(k_1-I_1,\ldots,k_N-I_N)$ if $I_n< k_n \leq I_n+J_n$ $\forall n$,
	and $\ten{C}(k_1,\ldots,k_N) = 0$ otherwise \\
$\ten{C}=\ten{A} \boxtimes \ten{B}$ 
	& Partial Kronecker product of factor tensors 
	   $\ten{A} \in \BB{R}^{R_1\times\cdots\times R_M\times I_1\times\cdots\times I_N}$
	and $\ten{B} \in \BB{R}^{S_1\times\cdots\times S_M\times I_1\times\cdots\times I_N}$ 
	yields a tensor $\ten{C}$ of size
	$R_1S_1 \times\cdots\times R_MS_M \times I_1\times\cdots\times I_N$ with subtensors
	$\ten{C}(:,\ldots,:,i_1,\ldots,i_N) = \ten{A}(:,\ldots,:,i_1,\ldots,i_N) \otimes \ten{B}(:,\ldots,:,i_1,\ldots,i_N)$ \\
$\ten{C}=\ten{A} \boxplus \ten{B}$ 
	& Partial direct sum of factor tensors 
	   $\ten{A} \in \BB{R}^{R_1\times\cdots\times R_M\times I_1\times\cdots\times I_N}$ and
	$\ten{B} \in \BB{R}^{S_1\times\cdots\times S_M\times I_1\times\cdots\times I_N}$ 
	yields a tensor $\ten{C}$ of size 
	$(R_1+S_1) \times\cdots\times (R_M+S_M) \times I_1\times\cdots\times I_N$ with subtensors 
	$\ten{C}(:,\ldots,:,i_1,\ldots,i_N) = \ten{A}(:,\ldots,:,i_1,\ldots,i_N) \oplus \ten{B}(:,\ldots,:,i_1,\ldots,i_N)$ \\
$\ten{C} = \ten{A}\times_n\BF{B}$ 
	& Mode-$n$ product 
		of tensor $\ten{A}\in\BB{R}^{I_1\times\cdots\times I_N}$ and 
		matrix $\BF{B}\in\BB{R}^{J\times I_n}$ 
	yields a tensor $\ten{C}$ of size $I_1\times\cdots\times I_{n-1}\times J\times I_{n+1}\times\cdots \times I_N$ with
	mode-$n$ fibers $\ten{C}(i_1,\ldots,i_{n-1},:,i_{n+1},\ldots,i_N)$
	$= \BF{B}\ten{A}(i_1,\ldots,i_{n-1},:,i_{n+1},\ldots,i_N)$ \\
$\ten{C} = \ten{A}\,\overline{\times}_n\,\BF{b}$ 
	& Mode-$n$ (vector) product 
	of tensor $\ten{A}\in\BB{R}^{I_1\times\cdots\times I_N}$ and 
	vector
	$\BF{b}\in\BB{R}^{I_n}$ yields a tensor $\ten{C}$ of size 
		$I_1\times\cdots\times I_{n-1}\times I_{n+1}\times\cdots \times I_N$ 
	with entries  $\ten{C}(i_1,\ldots,i_{n-1},i_{n+1},\ldots,i_N) 
		= $
	$\BF{b}^\Trps\ten{A}(i_1,\ldots,i_{n-1},:,i_{n+1},\ldots,i_N)$\\
$\left\llbracket \ten{G}; \ten{A}^{(1)},\ldots,\ten{A}^{(N)} \right\rrbracket$
	& Multilinear operator for tensors $\ten{G}$ and $\ten{A}^{(n)}$, $n=1,\ldots,N$, 
		defined by \eqref{def:multiop}\\
$\ten{C} = \ten{A}\modcon\ten{B}$ 
	& Mode-$(M,1)$ contracted product of 
		tensors $\ten{A}\in\BB{R}^{I_1\times\cdots\times I_M}$ and 
	$\ten{B}\in\BB{R}^{J_1\times J_2\times J_3\times\cdots\times J_N}$ 
	   with $I_M=J_1$ yields 
	a tensor $\ten{C}$ of size $I_1\times\cdots\times I_{M-1}\times J_2\times\cdots \times J_N$ 
		with entries 
	$\ten{C}(i_1,\ldots,i_{M-1},j_2,\ldots,j_N) = 
		\sum_{i_M=1}^{I_M} \ten{A}(i_1,\ldots,i_M) \ten{B}(i_M,j_2,\ldots,j_N)$\\
$\BF{C}=\BF{A} \skron \BF{B}$
	& Strong Kronecker product of two block matrices 
	$\BF{A}=[\BF{A}_{r_1,r_2}]\in\BB{R}^{R_1I_1\times R_2I_2}$ and $\BF{B}=[\BF{B}_{r_2,r_3}]\in\BB{R}^{R_2J_1\times R_3J_2}$ yields
	a block matrix $\BF{C}=[\BF{C}_{r_1,r_3}]\in\BB{R}^{R_1I_1J_1\times R_3I_2J_2}$ with blocks
	$\BF{C}_{r_1,r_3} = \sum_{r_2=1}^{R_2}\BF{A}_{r_1,r_2}\otimes
		\BF{B}_{r_2,r_3}$ \\
$\ten{C}=\ten{A} \skron \ten{B}$
	&Strong Kronecker product of two block tensors 
	$\ten{A}=[\ten{A}_{r_1,r_2}]\in\BB{R}^{R_1I_1\times R_2I_2 \times I_3}$
		and $\ten{B}=[\ten{B}_{r_2,r_3}]\in\BB{R}^{R_2J_1\times R_3J_2\times J_3}$ yields
	a block tensor $\ten{C}=[\ten{C}_{r_1,r_3}]\in\BB{R}^{R_1I_1J_1\times R_3I_2J_2\times I_3J_3}$ 
		with blocks 
	$\ten{C}_{r_1,r_3} = \sum_{r_2=1}^{R_2}\ten{A}_{r_1,r_2}\otimes
		\ten{B}_{r_2,r_3}$\\
$\ptrace\left( \ten{X} \right)$ 
	& Partial trace operator $\ptrace:\BB{R}^{R\times I_1\times \cdots \times I_{N}\times R}
	\rightarrow \BB{R}^{I_1\times I_2\times\cdots\times I_{N}}$
	is 
	defined  by  $\ptrace(\ten{X}) = \sum_{r=1}^{R} \ten{X}\left(r,i_1,\ldots,i_{N},r \right)$
	\\
\hline
\end{tabular}
\end{table}

\subsection{Graphical representations of tensors}

It is quite useful to visualize tensors and related operations by 
tensor network diagrams, e.g., see \cite{Holtz2012}, and references in \cite{Gra2013}. 
Figure \ref{Fig:TensorGraph}(a), (b), and (c) illustrate
tensor network diagrams representing a vector, a matrix, and a 3rd-order tensor. 
In each graph, the number of edges connected to a node
indicates the order of the tensor, and the mode size can be shown 
by the label on each edge. 
Figure \ref{Fig:TensorGraph}(d) represents the singular 
value decomposition of a matrix. 
The orthonormalized matrices are represented by 
half-filled circles and the diagonal matrix by a circle with slash inside. 
Figure \ref{Fig:TensorGraph}(e) represents the
mode-3 product, $\ten{A}\times_3\BF{B}$, of a tensor 
$\ten{A}\in\BB{R}^{I_1\times I_2\times I_3}$ with a matrix 
$\BF{B}\in\BB{R}^{J_1\times J_2}$ $(I_3=J_2)$. 
Figure \ref{Fig:TensorGraph}(f) represents the
contracted product, $\ten{A}\modcon\ten{B}$, of a tensor 
$\ten{A}\in\BB{R}^{I_1\times I_2\times I_3}$ with a tensor 
$\ten{B}\in\BB{R}^{J_1\times J_2\times J_3}$ $(I_3=J_1)$. 

\begin{figure}
\centering
\begin{tabular}{ccc}
\includegraphics[width=1.1cm]{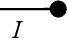}& 
\includegraphics[width=2cm]{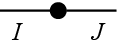}& 
\includegraphics[width=2cm]{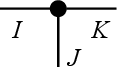}
\\
(a) & (b) & (c)
\\
\includegraphics[width=4.0cm]{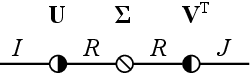}&
\includegraphics[width=3.0cm]{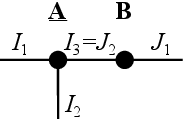}&
\includegraphics[width=3.0cm]{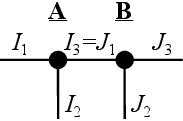}
\\
(d)&(e)&(f)
\end{tabular}
\caption{\label{Fig:TensorGraph}
Graphical representations for (a) a vector (b) a matrix, (c) a 3rd-order tensor, 
(d) singular value decomposition of 
an $I\times J$ matrix, 
(e) mode-$3$ product of a 3rd-order tensor with a matrix,
and (f) contracted product of two 3rd-order tensors.}
\end{figure}

\subsection{Kronecker, Hadamard, and outer products}

Definitions for traditional matrix-matrix product operations
such as the Kronecker, Hadamard, and outer products 
can immediately be generalized to tensor-tensor products. 

\begin{defn}[Kronecker product]
The Kronecker product of tensors $\ten{A}\in\BB{R}^{I_1\times I_2\times \cdots \times I_N}$
and $\ten{B}\in\BB{R}^{J_1\times J_2\times \cdots \times J_N}$ is defined by 
	$$
	\ten{C}=\ten{A}\otimes\ten{B}\in\BB{R}^{I_1J_1\times I_2J_2\times \cdots \times I_NJ_N}
	$$
with entries 
	$$
	\ten{C}({\overline{i_1j_1},\overline{i_2j_2},\ldots,\overline{i_Nj_N} })
	= \ten{A}({i_1,i_2,\ldots,i_N}) \ten{B}({j_1,j_2,\ldots,j_N}) 
	$$
for $i_n=1,2,\ldots,I_n,$ $j_n=1,2,\ldots,J_n,$ $n=1,\ldots,N$. 

\end{defn}

\begin{defn}[Hadamard product]
The Hadamard (elementwise) product of $\ten{A}\in\BB{R}^{I_1\times I_2\times\cdots\times I_N}$
and $\ten{B}\in\BB{R}^{I_1\times I_2\times\cdots\times I_N}$ is defined by 
	$$
	\ten{C}=\ten{A}\circledast\ten{B}
	\in\BB{R}^{I_1\times I_2\times\cdots\times I_N}
	$$
with entries 
	$$
	\ten{C}(i_1,i_2,\ldots,i_N) = 
	\ten{A}(i_1,i_2,\ldots,i_N) 
	\ten{B}(i_1,i_2,\ldots,i_N)
	$$
for $i_n=1,2,\ldots,I_n,$ $n=1,\ldots,N$. 
\end{defn}
\begin{defn}[Outer product]
The outer product of $\ten{A}\in\BB{R}^{I_1\times I_2\times\cdots\times I_M}$
and $\ten{B}\in\BB{R}^{J_1\times J_2\times\cdots\times J_N}$ is defined by 
	$$
	\ten{C}=\ten{A}\circ\ten{B}
	\in\BB{R}^{I_1\times I_2\times\cdots\times I_M
		\times J_1\times J_2\times \cdots \times J_N}
	$$
with entries 
	$$
	\ten{C}(i_1,i_2,\ldots,i_M,j_1,j_2,\ldots,j_N) = 
	\ten{A}(i_1,i_2,\ldots,i_M) 
	\ten{B}(j_1,j_2,\ldots,j_N)
	$$
for $i_m=1,2,\ldots,I_m,$ $m=1,\ldots,M$, $j_n=1,2,\ldots,J_n,$ $n=1,\ldots,N$. 
\end{defn}

Note that an $N$th-order tensor $\ten{X}\in\BB{R}^{I_1\times I_2\times\cdots\times I_N}$
is rank-one if it is written as the outer product of $N$ vectors
	$$
	\ten{X} = \BF{x}^{(1)}\circ \BF{x}^{(2)}\circ\cdots\circ\BF{x}^{(N)}. 
	$$
In general, $N$th-order tensor $\ten{X}\in\BB{R}^{I_1\times I_2\times \cdots\times I_N}$
can be represented as a sum of rank-one tensors as (so called CP or PARAFAC \cite{KolBa2009})
	$$
	\ten{X} = \sum_{r=1}^R \BF{x}^{(1)}_r\circ \BF{x}^{(2)}_r
	\circ\cdots\circ\BF{x}^{(N)}_r. 
	$$
The smallest number $R$ of the rank-one tensors that produce $\ten{X}$
is called the tensor rank of $\ten{X}$ \cite{KolBa2009}. 
We can define a tensor operation between rank-one tensors and generalize it
to sums of rank-one tensors. For example, let $\ten{A}=\BF{a}^{(1)}\circ \BF{a}^{(2)}\circ\cdots\circ\BF{a}^{(N)}$ 
and $\ten{B}=\BF{b}^{(1)}\circ \BF{b}^{(2)}\circ\cdots\circ\BF{b}^{(N)}$ 
denote two rank-one tensors, and let 
$\BF{a} \circledast \BF{b} = (a_i b_i)$ denote Hadamard (elementwise) product of vectors, then, 
\begin{itemize}
\item the Kronecker product $\ten{A}\otimes\ten{B}$ can be expressed by
	$$
	\ten{A} \otimes \ten{B}
	= 
	\left(\BF{a}^{(1)}\otimes\BF{b}^{(1)}\right)
	\circ \left(\BF{a}^{(2)}\otimes\BF{b}^{(2)}\right)
	\circ\cdots
	\circ\left(\BF{a}^{(N)}\otimes\BF{b}^{(N)}\right), 
	$$
\item the Hadamard product by 
	$$
	\ten{A} \circledast \ten{B}
	= 
	\left(\BF{a}^{(1)}\circledast\BF{b}^{(1)}\right)
	\circ \left(\BF{a}^{(2)}\circledast\BF{b}^{(2)}\right)
	\circ\cdots
	\circ\left(\BF{a}^{(N)}\circledast\BF{b}^{(N)}\right), 
	$$
\item and the outer product by 
	$$
	\ten{A} \circ \ten{B}
	= 
	\BF{a}^{(1)} \circ\cdots\circ \BF{a}^{(N)}\circ
	\BF{b}^{(1)} \circ\cdots\circ \BF{b}^{(N)}. 
	$$
\end{itemize}
However, the problem of determining the tensor rank of a 
specific tensor is NP-hard in general if the order 
is larger than 2 \cite{Hastad90}. So, for practical applications, 
we will define tensor operations 
by using index notation and provide examples with 
rank-one tensors.

\subsection{Direct sum}

The direct sum of matrices $\BF{A}$ and $\BF{B}$ is defined by 
	$$
	\BF{A}\oplus\BF{B} = \text{diag}\left(\BF{A},\BF{B}\right)
	= \begin{bmatrix}\BF{A} & \BF{0} \\ \BF{0} & \BF{B}
	\end{bmatrix}. 
	$$
A generalization of the direct sum to tensors is defined as follows. 
\begin{defn}[Direct sum]
The direct sum of tensors $\ten{A}\in\BB{R}^{I_1\times I_2\times\cdots\times I_N}$
and $\ten{B}\in\BB{R}^{J_1\times J_2\times\cdots\times J_N}$ is defined by 
	$$
	\ten{C}=\ten{A}\oplus\ten{B}
	\in\BB{R}^{(I_1+J_1)\times(I_2+J_2)\times\cdots\times (I_N+J_N)}
	$$
with entries 
	$$
	\ten{C}(k_1,k_2,\ldots,k_N) = 
	\begin{cases}
	\ten{A}(k_1,k_2,\ldots,k_N) & \text{if }1\leq k_n \leq I_n \ \forall n\\
	\ten{B}(k_1-I_1,k_2-I_2,\ldots,k_N-I_N) & \text{if }I_n<k_n\leq I_n+J_n\ \forall n\\
	0 & \text{otherwise}.
	\end{cases}
	$$

%

\end{defn}

As special cases, the direct sum of vectors $\BF{a}\in\BB{R}^{I}$ and $\BF{b}\in\BB{R}^{J}$
is the concatenated vector $\BF{a}\oplus\BF{b}\in\BB{R}^{I+J}$, and 
the direct sum of matrices $\BF{A}\in\BB{R}^{I_1\times I_2}$ and 
$\BF{B}\in\BB{R}^{J_1\times J_2}$ is the block diagonal matrix
$\BF{A}\oplus\BF{B}=\text{diag}(\BF{A},\BF{B}) \in\BB{R}^{(I_1+J_1)\times (J_1+J_2)}$. 
We suppose that the direct sum of scalars $a,b\in\BB{R}$ is the addition 
$a\oplus b=a+b\in\BB{R}$. 
Similarly, direct sum of two 3rd-order tensors is a block diagonal 3rd-order tensor, 
as illustrated in Figure~\ref{Fig_direct_sum_blocks}. 

\begin{figure}
\centering
\begin{tabular}{c}
\includegraphics[width=2.5cm]{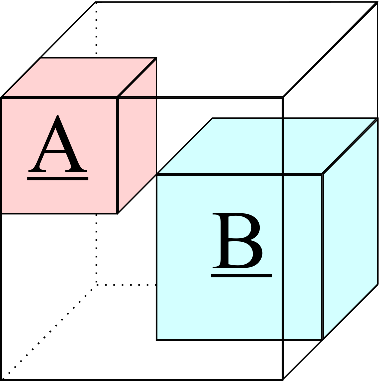}
\end{tabular}
\caption{\label{Fig_direct_sum_blocks}
Illustration of direct sum $\ten{A}\oplus\ten{B}$ of 3rd-order tenosors 
$\ten{A}$ and $\ten{B}$. }
\end{figure}

\subsection{Partial Kronecker product and partial direct sum: 
General operations for factor tensors}

A low-rank tensor decomposition method approximately represents a tensor as contraction of 
a collection of factor (core) tensors, which helps to reduce the number of representation parameters. 
A {\em factor tensor} refers to a tensor which forms such a collection 
in a low-rank tensor representation \cite{EspHacHanSch2011}. 
In general, each mode of a factor tensor can be classified as 
either a physical (spatial) mode or an auxiliary mode \cite{Cirac2013,EspNarSch2012}. 
In this paper, sizes of auxiliary modes are denoted by $R$ or $S$, and the corresponding 
indices are denoted by $r$ or $s$. For example, if a factor tensor $\ten{X}$ is mentioned to have 
size $R_1\times I_1 \times R_2 \times I_2$, then it implies that the modes 1 and 3 are auxiliary modes 
and the rest are physical modes. A factor tensor may have both types of or only one type of the physical and auxiliary modes. 

The partial Kronecker product and the partial direct sum, which will  be defined below, are 
generalizations of the Kronecker product and the direct sum to factor tensors. 

\begin{defn}[Partial Kronecker product and partial direct sum]
Let $\ten{A}\in\BB{R}^{R_1\times R_2\times \cdots \times R_M \times I_1\times\cdots\times I_N}$
and $\ten{B}\in\BB{R}^{S_1\times S_2\times \cdots \times S_M \times I_1\times\cdots\times I_N}$
be two factor tensors with $M$ auxiliary modes and $N$ physical modes. 
The partial Kronecker product of $\ten{A}$ and $\ten{B}$ is defined by 
	$$
	\ten{C} = \ten{A}\boxtimes\ten{B}
	\in \BB{R}^{R_1S_1\times R_2S_2\times \cdots \times R_MS_M \times I_1\times\cdots\times I_N}
	$$
with 	subtensors 
	$$
	\ten{C}(:,\ldots,:,i_1,\ldots,i_N) = \ten{A}(:,\ldots,:,i_1,\ldots,i_N) \otimes \ten{B}(:,:,\ldots,:,i_1,\ldots,i_N)
	$$
for $i_n=1,2,\ldots,I_n,$ $n=1,\ldots,N$. 
Similarly, the partial direct sum of $\ten{A}$ and $\ten{B}$ is defined by 
	$$
	\ten{C} = \ten{A}\boxplus\ten{B}
	\in \BB{R}^{(R_1+S_1)\times (R_2+S_2) \times \cdots \times (R_M+S_M) \times I_1\times\cdots\times I_N}
	$$
with 	subtensors
	$$
	\ten{C}(:,\ldots,:,i_1,\ldots,i_N) = \ten{A}(:,\ldots,:,i_1,\ldots,i_N) \oplus \ten{B}(:,:,\ldots,:,i_1,\ldots,i_N)
	$$
for $i_n=1,2,\ldots,I_n,$ $n=1,\ldots,N$. 
\end{defn}

In the above definition, if $M=0$, i.e., the tensors $\ten{A}$ and $\ten{B}$ have only physical modes, 
then the partial Kronecker product and the partial direct sum are equivalent to 
the Hadamard (elementwise) product and the elementwise addition, respectively. 
On the other hand, if $N=0$, i.e., there are only auxiliary modes, 
then the partial Kronecker product and the partial direct sum are equivalent to 
the Kronecker product and the direct sum, respectively.

\subsection{Multilinear operator}

The mode-$n$ product of a tensor $\ten{G}\in\BB{R}^{R_1\times R_2\times\cdots\times R_N}$
and a matrix $\BF{A}\in\BB{R}^{I_n\times R_n}$ is a multilinear operator defined by \cite{KolBa2009}
	\begin{equation} \label{mode_n_matrix}
	\ten{X} = \ten{G}\times_n\BF{A}\in\BB{R}^{R_1\times R_2\times\cdots\times 
			R_{n-1}\times I_n\times R_{n+1}\times\cdots\times R_N}
	\end{equation}
with entries 
	$$
	\ten{X}(r_1,r_2,\ldots,r_{n-1},i_n,r_{n+1},\ldots,r_N)
	= \sum_{r_n=1}^{R_n}
	\ten{G}(r_1,r_2,\ldots,r_N) \BF{A}(i_n,r_n). 
	$$
A few selected properties of the mode-$n$ product are listed as follows\footnote{
	In Proposition~\ref{prop_mode_n_mat}(e) and (f), 
	factors of Kronecker products are in a different (reversed) order, e.g., 
	increasing from 1 to $N$, compared to the order in the literature
	 \cite{DelathMoorVan2000,Kol2006,KolBa2009}, e.g., 
	 decreasing from $N$ to 1, due to the definition of matricizations in 
	\eqref{def_matricization}.
	}.

\begin{prop}[\cite{DelathMoorVan2000,Kol2006,KolBa2009}]
\label{prop_mode_n_mat}

	Let $\ten{G}\in\BB{R}^{R_1\times R_2\times\cdots\times R_N}$ be an $N$th-order tensor. Then
	\begin{enumerate}
	\item[(a)]
		$\ten{G}\times_m \BF{A} \times_n\BF{B} = \ten{G}\times_n\BF{B}\times_m\BF{A}$
		for $m\neq n$. 
	\item[(b)] 
		$\ten{G}\times_n \BF{A} \times_n\BF{B} = \ten{G}\times_n\BF{BA}$. 
	\item[(c)]
		If $\BF{A}$ has full column rank, then 
		$$
		\ten{X} = \ten{G}\times_n \BF{A}
		\Rightarrow \ten{G} = \ten{X} \times_n \BF{A}^{\dagger},
		$$
		where $\BF{A}^\dagger$ is the Moore-Penrose pseudoinverse of $\BF{A}$. 
		
	\item[(d)]
		If $\BF{A}\in\BB{R}^{I \times R_n}$, then 
		$$
		\ten{X} = \ten{G}\times_n\BF{A}
		\Leftrightarrow
		\BF{X}_{(n)} = \BF{A}\BF{G}_{(n)}.$$
		
	\item[(e)]
		If $\BF{A}^{(n)}\in\BB{R}^{I_n\times R_n}$ for all $n=1,\ldots,N$, we have, for $n\in\{1,\ldots,N\}$, 
		\begin{multline*}
		\ten{X} = \ten{G}\times_1 \BF{A}^{(1)} \times_2 \BF{A}^{(2)} \cdots \times_N \BF{A}^{(N)}
				\Leftrightarrow \\
		\BF{X}_{(n)} = \BF{A}^{(n)} \BF{G}_{(n)} \left( \BF{A}^{(1)} \otimes \cdots \otimes 
		\BF{A}^{(n-1)} \otimes \BF{A}^{(n+1)} \otimes \cdots \otimes \BF{A}^{(N)} \right)^\Trps. 
		\end{multline*}
		
	\item[(f)]
		If $\BF{A}^{(n)}\in\BB{R}^{I_n\times R_n}$ for all $n=1,\ldots,N$, we have, for $n\in\{1,\ldots,N\}$, 
		\begin{multline*}
		\ten{X} = \ten{G}\times_1 \BF{A}^{(1)} \times_2 \BF{A}^{(2)} \cdots \times_N \BF{A}^{(N)}
			\Leftrightarrow \\
		\BF{X}_{\unfoldto{n}} =  \left( \BF{A}^{(1)} \otimes \cdots \otimes \BF{A}^{(n)} \right)
			\BF{G}_{\unfoldto{n}} \left( \BF{A}^{(n+1)} \otimes \cdots \otimes \BF{A}^{(N)} \right)^\Trps. 
		\end{multline*}		
	\end{enumerate}
\end{prop}

Kolda and Bader \cite{KolBa2009} further introduced a multilinear operator 
called the Tucker operator \cite{Kol2006} to simplify the expression for the mode-$n$ product. 
The Tucker operator of a tensor $\ten{G}\in\BB{R}^{R_1\times R_2\times \cdots \times R_N}$
and matrices $\BF{A}^{(n)}\in\BB{R}^{I_n\times R_n},n=1,\ldots,N,$ is
defined by 
	\begin{equation}\label{eqn:multiopMatrix}
	\left\llbracket \ten{G} ; \BF{A}^{(1)}, \BF{A}^{(2)}, \ldots, \BF{A}^{(N)}
	\right\rrbracket 
	= \ten{G} \times_1\BF{A}^{(1)} \times_2\BF{A}^{(2)} \times_3\cdots \times_N\BF{A}^{(N)}
	\in\BB{R}^{I_1\times I_2\times\cdots\times I_N}. 
	\end{equation}
Here, we generalize this to a multilinear operator
between tensors. 

\begin{defn}[Multilinear operator]
Let $N \geq 1$ and $M_n\geq 0$ for $n=1,\ldots,N$. 
For an $N$th-order tensor $\ten{G}\in\BB{R}^{R_1\times R_2\times\cdots\times R_N}$
and $(M_n+1)$th-order tensors 
$\ten{A}^{(n)}\in\BB{R}^{I_{n,1}\times I_{n,2}\times\cdots\times 
I_{n,M_n}\times R_n}$, $n=1,\ldots,N,$
the multilinear operator is defined by the $(M_1+M_2+\cdots+M_N)$th-order tensor
	\begin{equation} \label{def:multiop}
	\ten{X} = \left\llbracket \ten{G}; \ten{A}^{(1)},\ldots,
	\ten{A}^{(N)}\right\rrbracket
	\in\BB{R}^{I_{1,1}\times\cdots\times 
	I_{1,M_1} \times\cdots\times
	I_{N,1}\times\cdots\times 
	I_{N,M_N}}
	\end{equation}
with entries
	\begin{equation*}
	\ten{X}(\BF{i}_1,\BF{i}_2,\ldots,\BF{i}_N) = 
	\sum_{r_1=1}^{R_1}\sum_{r_2=1}^{R_2}\cdots\sum_{r_N=1}^{R_N}
	\ten{G}(r_1,r_2,\ldots,r_N)
	\ten{A}^{(1)}(\BF{i}_1,r_1) \ten{A}^{(2)}(\BF{i}_2,r_2) \cdots 
	\ten{A}^{(N)}(\BF{i}_N,r_N),
	\end{equation*}
where $\BF{i}_n=(i_{n,1},i_{n,2},\ldots,i_{n,M_n})$ is the ordered indices. 

\end{defn}

Figure~\ref{Fig_multilinear}(a) illustrates the tensor network diagram for 
multilinear operator $\llbracket \ten{G}; \ten{A}^{(1)},\ldots,\ten{A}^{(N)} \rrbracket$ 
with an $N$th-order tensor $\ten{G}$ and 4th-order tensors $\ten{A}^{(1)}$, \ldots, $\ten{A}^{(N)}$. 

As a special case, if $\ten{A}^{(n)}$ are matrices, i.e., $M_n=1$ for all $n$, then 
the multilinear operator \eqref{def:multiop} is equivalent to 
the standard Tucker operator \eqref{eqn:multiopMatrix}. 
Moreover, in the case of vectors $\BF{a}^{(n)}\in\BB{R}^{R_n}$, 
i.e., $M_n=0$, we have the scalar
	$$
	\left\llbracket \ten{G}; \BF{a}^{(1)}, \BF{a}^{(2)}, \ldots, \BF{a}^{(N)}
	\right\rrbracket
	=
	\ten{G} \,\overline{\times}_1\, \BF{a}^{(1)} \,\overline{\times}_2\, \BF{a}^{(2)}
	\,\overline{\times}_3\,
	\cdots \,\overline{\times}_N\, \BF{a}^{(N)}
	\in\BB{R},
	$$
where $\overline{\times}_n$ is the mode-$n$ (vector)
product \cite{KolBa2009}. 

\begin{exmp}
\label{example_multilinear_rank1}
Let $\ten{G}=\BF{g}^{(1)}\circ\BF{g}^{(2)}\circ\BF{g}^{(3)}\in
\BB{R}^{R_1\times R_2\times R_3}$
and 
$\ten{A}=\BF{a}^{(1)}\circ\BF{a}^{(2)}\circ
\BF{a}^{(3)}\circ\BF{a}^{(4)}\circ\BF{a}^{(5)}\in
\BB{R}^{I_1\times I_2\times I_3\times I_4\times R_2}$ be 
rank-one tensors. 
Then, 
	$$
	\left\llbracket
	\ten{G}; \BF{I}_{R_1}, \ten{A}, \BF{I}_{R_3} \right\rrbracket
	= \langle \BF{g}^{(2)}, \BF{a}^{(5)} \rangle
	\cdot 
	\BF{g}^{(1)} \circ \BF{a}^{(1)}\circ\BF{a}^{(2)}\circ\BF{a}^{(3)}\circ\BF{a}^{(4)} 
	\circ \BF{g}^{(3)} 
	\in\BB{R}^{R_1\times I_1\times I_2\times I_3\times I_4 \times R_3}, 
	$$
where $\langle \BF{v}, \BF{w} \rangle = \BF{v}^\Trps\BF{w}$ 
is the innerproduct of vectors, see Figure~\ref{Fig_multilinear}(b).  
\end{exmp}

\begin{figure}
\centering
\begin{tabular}{ccc}
\includegraphics[width=5cm]{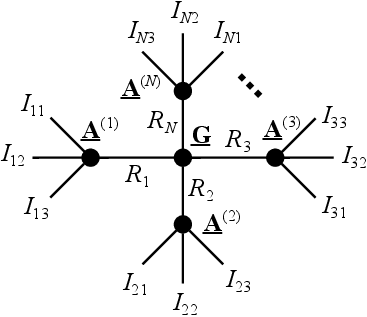}& \hspace{2pc} &
\includegraphics[width=5cm]{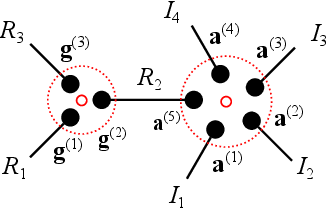}
\\
(a) & & (b)
\end{tabular}
\caption{\label{Fig_multilinear}
Tensor network diagrams for (a) multilinear operator 
$\llbracket \ten{G}; \ten{A}^{(1)},\ldots,\ten{A}^{(N)} \rrbracket$ 
with $N$th-order tensor $\ten{G}$ and  4th-order tensors $\ten{A}^{(1)}$, \ldots, $\ten{A}^{(N)}$, and 
(b) multilinear operator 
$\llbracket \ten{G}; \BF{I}_{R_1}, \ten{A}, \BF{I}_{R_3} \rrbracket$ 
with rank-one tensors $\ten{G}=\BF{g}^{(1)}\circ \BF{g}^{(2)}\circ \BF{g}^{(3)}$ and 
$\ten{A} = \BF{a}^{(1)}\circ \BF{a}^{(2)}\circ \cdots\circ \BF{a}^{(5)}$ 
as described in Example~\ref{example_multilinear_rank1}. 
}
\end{figure}

We can derive the following properties.

\begin{prop} 
Let $N\geq 1$ and $M_n\geq 0$ for $n=1,\ldots,N$. 
Let $\ten{G}_A$ and $\ten{G}_B$ be $N$th-order tensors and 
$\ten{A}^{(n)}$ and $\ten{B}^{(n)}$ be $(M_n+1)$th-order factor tensors  
whose $(M_n+1)$th modes are auxiliary modes for $n=1,\ldots,N$. Let  
	$$
	\ten{A} = \left\llbracket \ten{G}_A; \ten{A}^{(1)},\ldots,\ten{A}^{(N)}
	\right\rrbracket
	\quad
	\text{and}
	\quad
	\ten{B} = \left\llbracket \ten{G}_B; \ten{B}^{(1)},\ldots,\ten{B}^{(N)}
	\right\rrbracket. 
	$$
Then 
	\begin{enumerate}
	\item[(a)] $\ten{A}+\ten{B} = 
			\left\llbracket \ten{G}_A\oplus\ten{G}_B; 
			\ten{A}^{(1)} \boxplus \ten{B}^{(1)},\ldots,
			\ten{A}^{(N)} \boxplus \ten{B}^{(N)}
			\right\rrbracket$ if $\ten{A}$  and $\ten{B}$ have the same size.  
	\item[(b)] $\ten{A}\oplus\ten{B} = 
			\left\llbracket \ten{G}_A\oplus\ten{G}_B; 
			\ten{A}^{(1)} \oplus \ten{B}^{(1)},\ldots,
			\ten{A}^{(N)} \oplus \ten{B}^{(N)}
			\right\rrbracket$. 
	\item[(c)] $\ten{A}\circledast\ten{B} = 
			\left\llbracket \ten{G}_A\otimes\ten{G}_B; 
			\ten{A}^{(1)} \boxtimes \ten{B}^{(1)},\ldots,
			\ten{A}^{(N)} \boxtimes \ten{B}^{(N)}
			\right\rrbracket$ if $\ten{A}$  and $\ten{B}$ have the same size. 
	\item[(d)] $\ten{A}\otimes\ten{B} = 
			\left\llbracket \ten{G}_A\otimes\ten{G}_B; 
			\ten{A}^{(1)} \otimes \ten{B}^{(1)},\ldots,
			\ten{A}^{(N)} \otimes \ten{B}^{(N)}
			\right\rrbracket$. 
	\end{enumerate}
\end{prop}
\begin{proof} (a) to (d) can be derived from the definitions of the corresponding 
operations and algebraic manipulation. 
\end{proof}

\begin{exmp} We consider the examples where 
$\ten{A}^{(n)}$ and $\ten{B}^{(n)}$ are either factor matrices or vectors. 
	\begin{enumerate}
	\item Let $M_n=1$ for $n=1,\ldots,N$, i.e., the tensors $\ten{A}$ and $\ten{B}$ have the form
		(which is the Tucker decomposition, to be introduced in Section~\ref{sec:cp_n_tucker})
			$$
			\ten{A} = \left\llbracket \ten{G}_A; \BF{A}^{(1)},\ldots,\BF{A}^{(N)}
			\right\rrbracket \in\BB{R}^{I_1\times I_2\times\cdots\times I_N}, 
			$$
			$$
			\ten{B} = \left\llbracket \ten{G}_B; \BF{B}^{(1)},\ldots,\BF{B}^{(N)}
			\right\rrbracket \in\BB{R}^{I_1\times I_2\times\cdots\times I_N}. 
			$$
		It follows that the Kronecker product, Hadamard product, direct sum, 
		and addition lead to tensors in the same form: 
			\begin{enumerate}
			\item[(a)] $\ten{A}+\ten{B} = 
					\left\llbracket \ten{G}_A\oplus\ten{G}_B; 
					\BF{A}^{(1)} \boxplus \BF{B}^{(1)},\ldots,
					\BF{A}^{(N)} \boxplus \BF{B}^{(N)}
					\right\rrbracket$. 
			\item[(b)] $\ten{A}\oplus\ten{B} = 
					\left\llbracket \ten{G}_A\oplus\ten{G}_B; 
					\BF{A}^{(1)}\oplus \BF{B}^{(1)},\ldots,
					\BF{A}^{(N)}\oplus\BF{B}^{(N)}
					\right\rrbracket$. 
			\item[(c)] $\ten{A}\circledast\ten{B} = 
					\left\llbracket \ten{G}_A\otimes\ten{G}_B; 
					\BF{A}^{(1)} \boxtimes \BF{B}^{(1)},\ldots,
					\BF{A}^{(N)} \boxtimes \BF{B}^{(N)}
					\right\rrbracket$. 
			\item[(d)] $\ten{A}\otimes\ten{B} = 
					\left\llbracket \ten{G}_A\otimes\ten{G}_B; 
					\BF{A}^{(1)}\otimes \BF{B}^{(1)},\ldots,
					\BF{A}^{(N)}\otimes\BF{B}^{(N)}
					\right\rrbracket$. 
			\end{enumerate}
		Moreover, if the core tensors $\ten{G}_A$ and $\ten{G}_B$ are superdiagonal tensors, 
		which is the case of the CP decomposition (see Section~\ref{sec:cp_n_tucker},)
		then the results are also given as the CP decomposition because the Kronecker 
		product and the direct sum of superdiagonal core tensors 	are superdiagonal 
		tensors as well. 
	\item  Let $M_n=0$ for $n=1,\ldots,N$, then we have the scalars
			$$
			a = \left\llbracket \ten{G}_A; \BF{a}^{(1)},\ldots,\BF{a}^{(N)}
			\right\rrbracket \in\BB{R}, 
			$$
			$$
			b = \left\llbracket \ten{G}_B; \BF{b}^{(1)},\ldots,\BF{b}^{(N)}
			\right\rrbracket \in\BB{R}. 
			$$
		The addition and multiplication are given in the form
			\begin{enumerate}
			\item[(a)] $a+b = a\oplus b =
					\left\llbracket \ten{G}_A\oplus\ten{G}_B; 
					\BF{a}^{(1)}\oplus \BF{b}^{(1)},\ldots,
					\BF{a}^{(N)}\oplus\BF{b}^{(N)}
					\right\rrbracket$, 
			\item[(b)] $ab = a\otimes b = a\circledast b = 
					\left\llbracket \ten{G}_A\otimes\ten{G}_B; 
					\BF{a}^{(1)}\otimes \BF{b}^{(1)},\ldots,
					\BF{a}^{(N)}\otimes\BF{b}^{(N)}
					\right\rrbracket$. 
			\end{enumerate}
	\end{enumerate}
\end{exmp}

\subsection{Contracted product} 
\label{sec_modeM1_contracted}

The mode-$n$ product of a tensor with a matrix in \eqref{mode_n_matrix} 
can be extended to a product between tensors of any orders. 
We define one of the simplest cases of the tensor-by-tensor contracted product
as follows. 

\begin{defn}[Mode-$(M,1)$ contracted product]
Let $M, N \geq 1$. 
The mode-($M,1$) contracted product
of tensors $\ten{A}\in\BB{R}^{I_1\times I_2\times\cdots\times I_M}$ and 
$\ten{B}\in\BB{R}^{J_1\times J_2\times\cdots\times J_N}$ with $I_M=J_1$ is defined by 
	\begin{equation*}
	\ten{C} = \ten{A}\modcon\ten{B} \in\BB{R}^{I_1\times\cdots\times I_{M-1}\times J_2\times\cdots \times J_N}
	\end{equation*}
with entries 
	$$
	\ten{C}(i_1,\ldots,i_{M-1},j_2,\ldots,j_N)
	= \sum_{i_M=1}^{I_M} \ten{A}(i_1,\ldots,i_M)
	\ten{B}(i_M,j_2,\ldots,j_N)
	$$
for all $i_m$, $j_n$, $m=1,\ldots,M-1,$ $n=2,\ldots,N$. 

\end{defn}

We note that the tensor-by-tensor contracted product defined above
is a natural generalization of the matrix multiplication as 
$\BF{A}\BF{B} = \BF{A}\modcon\BF{B}$, and the 
vector innerproduct as 
$\langle \BF{a}, \BF{b} \rangle = \BF{a}\modcon\BF{b}$. 
Especially, the contracted product between a tensor $\ten{A}
\in\BB{R}^{I_1\times \cdots\times I_M}$ and a vector
$\BF{p}\in\BB{R}^{I_1}$ or $\BF{q}\in\BB{R}^{I_M}$
produces a tensor of smaller order as 
	\begin{equation*}
	\begin{split}
	\BF{p}\modcon \ten{A}    & \in\BB{R}^{I_2\times\cdots\times I_M}, \\
	\ten{A}\modcon \BF{q}    & \in\BB{R}^{I_1\times\cdots\times I_{M-1}}.
	\end{split}
	\end{equation*}
\begin{exmp}
The contracted product of rank-one tensors yields
	$$
	\left(\BF{a}^{(1)}\circ\cdots\circ\BF{a}^{(M)} \right)
	\modcon
	\left(\BF{b}^{(1)}\circ\cdots\circ\BF{b}^{(N)} \right)
	= 
	\left\langle \BF{a}^{(M)}, \BF{b}^{(1)} \right\rangle
	\cdot \BF{a}^{(1)}\circ\cdots\circ\BF{a}^{(M-1)}
	\circ
	\BF{b}^{(2)}\circ\cdots\circ\BF{b}^{(N)}. 
	$$
\end{exmp}

In general, we have the following properties: 

\begin{prop} 
\label{prop_modcon}
Let $\ten{A}\in\BB{R}^{I_1\times I_2\times\cdots\times I_M}$, 
$\ten{B}\in\BB{R}^{I_M\times J_2\times\cdots\times J_N}$, 
$\ten{C}\in\BB{R}^{J_N\times K_2\times\cdots\times K_L}$, 
$\ten{G}\in\BB{R}^{R_1\times\cdots\times R_N}$, 
$\BF{P}\in\BB{R}^{I\times R_1}$, and 
$\BF{Q}\in\BB{R}^{R_N \times J}$. 
Then, 
	\begin{enumerate}
	\item[(a)] 
	$\ten{A}\modcon\ten{B} = \llbracket \ten{B}; \ten{A}, \BF{I}_{J_2}, \ldots, \BF{I}_{J_N} \rrbracket$. 
		
	\item[(b)] 
		$(\ten{A}\modcon\ten{B})\modcon\ten{C} = \ten{A}\modcon(\ten{B}\modcon\ten{C})$. 
		
	\item[(c)] $\BF{P}\modcon\ten{G} = \ten{G}\times_1\BF{P}$.
	
	\item[(d)] $\ten{G}\modcon\BF{Q}=\ten{G}\times_N\BF{Q}^\Trps$.
	
	\item[(e)]
			 $\left( \ten{A}\modcon\ten{B} \right)_{\unfoldto{m}} =  
				\BF{A}_{\unfoldto{m}} \left( \BF{I}_{I_{m+1}I_{m+2}\cdots I_{M-1}} 
				\otimes  \BF{B}_{(1)} \right)$
			for $m=1,2,\ldots,M-1$.  
			
	\item[(f)] 
			 $\left( \ten{A}\modcon\ten{B} \right)_{\unfoldto{M+n-2}} = 
				\left( \BF{A}_{\unfoldto{M-1}} \otimes \BF{I}_{J_2J_3\cdots J_n} \right)
				\BF{B}_{\unfoldto{n}}$
			for $n=2,\ldots,N.$
	\item[(g)] 
			$\text{vec}\left( \ten{A}\modcon\ten{B} \right)
			= \left( \BF{I}_{I_{1}I_{2}\cdots I_{M-1}} 
				\otimes  \BF{B}_{(1)}^\Trps \right) \text{vec}\left(\ten{A}\right)
			= \left( \BF{A}_{(M)}^\Trps \otimes \BF{I}_{J_2J_3\cdots J_N} \right)
				\text{vec}\left( \ten{B} \right)$. 
	\end{enumerate}
\end{prop}
\begin{proof}
	(a) to (d) follow immediately from the definitions of the corresponding operations. 
	We can prove (e) and the first equality of (g) as follows.
	Let $\BF{Y}_{\unfoldto{0}} \equiv \text{vec}(\ten{Y})^\Trps$ denote the row vector
	for a tensor $\ten{Y}$.  
	 Note that $\ten{A}\modcon\ten{B}\in
	\BB{R}^{I_1\times \cdots\times I_{M-1}\times J_2\times \cdots \times J_N}$. 
	For $0\leq m\leq M-1$, we have
		\begin{equation*}
		\begin{split}
		\left(\ten{A}\modcon\ten{B}\right)_{\unfoldto{m}} 
		( \overline{i_1\cdots i_m}, \overline{i_{m+1}\cdots i_{M-1}j_2\cdots j_N}) 
		& = \left(\ten{A}\modcon\ten{B}\right)(i_1,\ldots,i_{M-1},j_2,\ldots,j_N)\\
		& = \sum_{i_M=1}^{I_M} \ten{A}(i_1,\ldots,i_M) \ten{B}(i_M,j_2,\ldots,j_N)\\
		& = \sum_{i_M=1}^{I_M} \ten{A}(i_1,\ldots,i_M)  \BF{B}_{(1)} (i_M, \overline{j_2\cdots j_N})\\
		& = \left( \ten{A} \times_M \BF{B}_{(1)}^\Trps \right) (i_1,\ldots,i_{M-1}, \overline{j_2\cdots j_N}). 
		\end{split}
		\end{equation*}
	From Proposition~\ref{prop_mode_n_mat}(f), we have 
		$$
		\ten{X} = \ten{A} \times_M \BF{B}_{(1)}^\Trps
		\Leftrightarrow 
		\BF{X}_{\unfoldto{m}} = \BF{A}_{\unfoldto{m}}
		\left( \BF{I}_{I_{m+1}} \otimes \cdots \otimes \BF{I}_{I_{M-1}} \otimes
		 \BF{B}_{(1)} \right), 
		$$
	and since
	$$\ten{X} (i_1,\ldots,i_{M-1},\overline{j_2\cdots j_N}) = 
	\BF{X}_{\unfoldto{m}} (\overline{i_1\cdots i_m}, \overline{i_{m+1}\cdots i_{M-1} j_2\cdots j_N}), $$
	the results in (e) and (g) follow. 
	
	We can prove (f)  and the second equality of (g) similarly. We have
		$$
		\left(\ten{A}\modcon\ten{B}\right)_{\unfoldto{M+n-2}} 
		( \overline{i_1\cdots i_{M-1}j_2\cdots j_{n}  }, \overline{j_{n+1}\cdots j_N}) 
		= \left( \ten{B} \times_1 \BF{A}_{\unfoldto{M-1}} \right) 
		( \overline{i_1\cdots i_{M-1}}, j_2,\ldots,j_N).  
		$$
	From Proposition~\ref{prop_mode_n_mat}(f), we have 
		$$
		\ten{X} = \ten{B} \times_1 \BF{A}_{\unfoldto{M-1}}
		\Leftrightarrow 
		\BF{X}_{\unfoldto{n}} = \left( \BF{A}_{\unfoldto{M-1}} \otimes \BF{I}_{J_2} \otimes 
				\BF{I}_{J_3} \otimes \cdots \otimes \BF{I}_{J_{n}}  \right)
				\BF{B}_{\unfoldto{n}}, 
		$$
	and the results in (f) and (g) follow from 
	$$\ten{X} ( \overline{i_1\cdots i_{M-1}}, j_2,\ldots,j_N) = 
	\BF{X}_{\unfoldto{n}} (\overline{i_1\cdots i_{M-1}j_2\cdots j_{n}  }, \overline{j_{n+1}\cdots j_N}). $$ 
		
\end{proof}

Moreover, the following property states that several binary operations 
(addition, direct sum, Hadamard product, and Kronecker product) preserve
the form of sequential contracted products of factor (core) tensors. The form will be 
introduced as the TT decomposition in Section~\ref{sec:ttd}. 

\begin{prop} 
\label{prop_TT_algebra}
Let $N\geq 2$ and 
 	\begin{equation*}
 	\begin{split}
	\ten{A}  
	&=  \BF{A}^{(1)} \modcon\ten{A}^{(2)} \modcon\cdots\modcon \ten{A}^{(N-1)}\modcon \BF{A}^{(N)}
	\in\BB{R}^{I_1\times I_2\times\cdots\times I_N},\\
	\qquad
	\ten{B}  
	&=  \BF{B}^{(1)}\modcon\ten{B}^{(2)} \modcon\cdots\modcon \ten{B}^{(N-1)}\modcon \BF{B}^{(N)}
	\in\BB{R}^{J_1\times J_2\times\cdots\times J_N}, 
	\end{split}
	\end{equation*}
where $\ten{A}^{(n)}$ and $\ten{B}^{(n)}$ are factor (core) tensors with sizes 
	$$
	\BF{A}^{(1)}\in\BB{R}^{I_1\times R_1}, \quad
	\ten{A}^{(n)}\in\BB{R}^{R_{n-1}\times I_n\times R_n},\ n=2,\ldots,N-1, \quad 
	\BF{A}^{(N)}\in\BB{R}^{R_{N-1}\times I_N}, 
	$$ 
	$$
	\BF{B}^{(1)}\in\BB{R}^{J_1\times S_1}, \quad
	\ten{B}^{(n)}\in\BB{R}^{S_{n-1}\times J_n\times S_n},\ n=2,\ldots,N-1, \quad 
	\BF{B}^{(N)}\in\BB{R}^{S_{N-1}\times J_N}. 
	$$
Then, 
	\begin{enumerate}
	\item[(a)] $\ten{A}+\ten{B} = 
			\left(\BF{A}^{(1)} \boxplus \BF{B}^{(1)} \right) \modcon 
			\left(\ten{A}^{(2)} \boxplus \ten{B}^{(2)} \right)
			\modcon \cdots \modcon
			\left(\BF{A}^{(N)} \boxplus \BF{B}^{(N)} \right)$
			if $\ten{A}$ and $\ten{B}$ have the same size. 
	\item[(b)] $\ten{A}\oplus\ten{B} = 
			\left(\BF{A}^{(1)}\oplus \BF{B}^{(1)} \right) \modcon
			\left(\ten{A}^{(2)}\oplus \ten{B}^{(2)} \right)
			\modcon \cdots \modcon
			\left(\BF{A}^{(N)}\oplus\BF{B}^{(N)} \right)$.
	\item[(c)] $\ten{A}\circledast\ten{B} = 
			\left(\BF{A}^{(1)} \boxtimes \BF{B}^{(1)} \right) \modcon
			\left(\ten{A}^{(2)} \boxtimes \ten{B}^{(2)} \right)
			\modcon \cdots \modcon
			\left(\BF{A}^{(N)} \boxtimes \BF{B}^{(N)} \right)$
			if $\ten{A}$ and $\ten{B}$ have the same size. 
	\item[(d)] $\ten{A}\otimes\ten{B} = 
			\left(\BF{A}^{(1)}\otimes \BF{B}^{(1)} \right) \modcon
			\left(\ten{A}^{(2)}\otimes \ten{B}^{(2)} \right)
			\modcon \cdots \modcon
			\left(\BF{A}^{(N)}\otimes\BF{B}^{(N)} \right)$.
	\end{enumerate}
\end{prop}
\begin{proof}
(a) to (d) can be derived by algebraic manipulation and the definitions of the corresponding 
operations. 
\end{proof}

The contracted product of tensors defined above can be further generalized to 
a contracted product of block tensors as follows. 
In the following definition, the tensors are given in
partitioned form and the contracted product is 
performed between each pair of blocks. 

\begin{defn}[Mode-$(M,1)$ contracted product for block tensors]
Let tensors $\widetilde{\ten{A}} = \begin{bmatrix}\ten{A}_{r_1,r_2} \end{bmatrix}$ 
and $\widetilde{\ten{B}} = \begin{bmatrix}\ten{B}_{s_1,s_2} \end{bmatrix}$
be block tensors partitioned with 
$M$th-order tensors $\ten{A}_{r_1,r_2}\in\BB{R}^{I_1\times \cdots \times I_M}$, 
$r_1=1,\ldots,R_1$, $r_2=1,\ldots,R_2$, and
$N$th-order tensors $\ten{B}_{s_1,s_2}\in\BB{R}^{J_1\times \cdots \times J_N}$ with $I_M = J_1$, $s_1=1,\ldots,S_1$, 
$s_2=1,\ldots,S_2$, respectively, i.e., 
	$$
	\widetilde{\ten{A}}
	= \begin{bmatrix}
	\ten{A}_{1,1}&\cdots&\ten{A}_{1,R_2}\\
	\vdots&\ddots&\vdots\\
	\ten{A}_{R_1,1}&\cdots&\ten{A}_{R_1,R_2}
	\end{bmatrix}, 
	\qquad 
	\widetilde{\ten{B}}
	= \begin{bmatrix}
	\ten{B}_{1,1}&\cdots&\ten{B}_{1,S_2}\\
	\vdots&\ddots&\vdots\\
	\ten{B}_{S_1,1}&\cdots&\ten{B}_{S_1,S_2}
	\end{bmatrix}. 
	$$ 
The mode-$(M,1)$ contracted product of $\widetilde{\ten{A}}$ and $\widetilde{\ten{B}}$ is 
defined by the block tensor
	$$
	\widetilde{\ten{C}}
	= \begin{bmatrix} \ten{C}_{t_1,t_2}
	\end{bmatrix}
	= 
	\widetilde{\ten{A}} \modcon \widetilde{\ten{B}}
	$$
partitioned with the $(M+N-1)$th-order tensors 
	$$
	\ten{C}_{t_1,t_2} = 
	\ten{A}_{r_1,r_2} \modcon \ten{B}_{s_1,s_2}
	\in\BB{R}^{I_1\times\cdots\times I_{M-1} \times J_2 \times \cdots \times J_N},
	\quad 
	t_1 = \overline{r_1s_1}, \ 
	t_2 = \overline{r_2s_2},
	$$
for all $t_1=1,\ldots,R_1S_1$, $t_2=1,\ldots,R_2S_2$. 

\end{defn}

\begin{exmp}
For example, the contracted product of block matrices $\widetilde{\BF{A}}$ and 
$\widetilde{\BF{B}}$ with 2 row partitions and 2 column partitions yields a block 
matrix with 4 row partitions and 4 column partitions as 
	$$
	\begin{bmatrix}\BF{A}_{11}&\BF{A}_{12}\\
	\BF{A}_{21}&\BF{A}_{22}\end{bmatrix}
	 \modcon
	\begin{bmatrix}\BF{B}_{11}&\BF{B}_{12}\\
	\BF{B}_{21}&\BF{B}_{22}\end{bmatrix}
	= 
	\begin{bmatrix}
	\BF{A}_{11} \BF{B}_{11} & \BF{A}_{11} \BF{B}_{12}&
	\BF{A}_{12} \BF{B}_{11}& \BF{A}_{12} \BF{B}_{12}\\
	\BF{A}_{11} \BF{B}_{21}& \BF{A}_{11} \BF{B}_{22}&
	\BF{A}_{12} \BF{B}_{21}&\BF{A}_{12} \BF{B}_{22}\\
	\BF{A}_{21} \BF{B}_{11} & \BF{A}_{21} \BF{B}_{12}&
	\BF{A}_{22} \BF{B}_{11}& \BF{A}_{22} \BF{B}_{12}\\
	\BF{A}_{21} \BF{B}_{21}& \BF{A}_{21} \BF{B}_{22}&
	\BF{A}_{22} \BF{B}_{21}&\BF{A}_{22} \BF{B}_{22}\\
	\end{bmatrix}.
	$$
Note that the above definition of contracted product for block matrices/block tensors
is different from the standard matrix-by-matrix product. That is, in the same example, the standard matrix-by-matrix product produces a block matrix with 2 row partitions and 2 column partitions  with blocks $\BF{C}_{t_1,t_2} = \sum_{r=1}^2 \BF{A}_{t_1,r} \BF{B}_{r,t_2}. $
\end{exmp}

\subsection{Strong Kronecker product}

The strong Kronecker product is an important tool for representation of 
low-rank TT decompositions of large-scale vectors, matrices, and low-order tensors. 
The original definition of the strong Kronecker product for block matrices \cite{Launey94} 
is presented below, together with its generalization to block tensors. 

\begin{defn}[Strong Kronecker product, \cite{Launey94}]
Let matrices $\BF{A} = [\BF{A}_{r_1,r_2}] \in\BB{R}^{R_1I_1\times R_2J_1}$ and 
$\BF{B} = [\BF{B}_{r_2,r_3}] \in\BB{R}^{R_2I_2 \times R_3J_2}$ 
be block matrices partitioned with $\BF{A}_{r_1,r_2}\in\BB{R}^{I_1\times J_1}$
and $\BF{B}_{r_2,r_3}\in\BB{R}^{I_2\times J_2}$, respectively. 
The strong Kronecker product of $\BF{A}$ and $\BF{B}$ is defined by 
the block matrix 
	$$
	\BF{C}
	= \begin{bmatrix} \BF{C}_{r_1,r_3} \end{bmatrix}
	= \BF{A} \skron \BF{B} \in\BB{R}^{R_1I_1I_2\times R_3J_1J_2}, 
	$$
partitioned with the $I_1I_2\times J_1J_2$ matrices 
	$$
	\BF{C}_{r_1,r_3} = \sum_{r_2=1}^{R_2} \BF{A}_{r_1,r_2}\otimes \BF{B}_{r_2,r_3}
	\in\BB{R}^{I_1I_2 \times J_1J_2}, 
	$$
for $	r_1=1,\ldots,R_1,$ $r_3=1,\ldots,R_3$.

More generally, let tensors 
$\ten{A} = \begin{bmatrix}\ten{A}_{r_1,r_2} \end{bmatrix} 
\in\BB{R}^{R_1I_1\times R_2J_1 \times K_1}$ and 
$\ten{B} = \begin{bmatrix}\ten{B}_{r_2,r_3} \end{bmatrix}
\in\BB{R}^{R_2I_2\times R_3J_2 \times K_2}$ 
be block tensors partitioned with 3rd-order tensors 
$\ten{A}_{r_1,r_2}\in\BB{R}^{I_1\times J_1\times K_1}$ and 
$\ten{B}_{r_2,r_3}\in\BB{R}^{I_2\times J_2\times K_2}$. 
The strong Kronecker product of $\ten{A}$ and $\ten{B}$ is 
defined by the block tensor
	$$
	\ten{C}
	= \begin{bmatrix} \ten{C}_{r_1,r_3}
	\end{bmatrix}
	= 
	\ten{A} \skron \ten{B}
	\in\BB{R}^{R_1I_1I_2 \times R_3J_1J_2 \times K_1K_2}, 
	$$
partitioned with the $I_1I_2 \times J_1J_2 \times K_1K_2$ tensors 
	$$
	\ten{C}_{r_1,r_3} = \sum_{r_2=1}^{R_2} 
	\ten{A}_{r_1,r_2}\otimes \ten{B}_{r_2,r_3}
	\in\BB{R}^{I_1I_2 \times J_1J_2 \times K_1K_2},
	$$
for $	r_1=1,\ldots,R_1$, $r_3=1,\ldots,R_3$.
\end{defn}

\begin{exmp}
The strong Kronecker product has a similarity with the matrix-by-matrix multiplication. 
For example, 
	$$
	\begin{bmatrix}\BF{A}_{11}&\BF{A}_{12}\\
	\BF{A}_{21}&\BF{A}_{22}\end{bmatrix}
	 \skron 
	\begin{bmatrix}\BF{B}_{11}&\BF{B}_{12}\\
	\BF{B}_{21}&\BF{B}_{22}\end{bmatrix}
	= 
	\begin{bmatrix}\BF{A}_{11}\otimes\BF{B}_{11}
	+\BF{A}_{12}\otimes\BF{B}_{21}&
	\BF{A}_{11}\otimes\BF{B}_{12}+\BF{A}_{12}\otimes\BF{B}_{22}\\
	\BF{A}_{21}\otimes\BF{B}_{11}+\BF{A}_{22}\otimes\BF{B}_{21}&
	\BF{A}_{21}\otimes\BF{B}_{12}+\BF{A}_{22}\otimes\BF{B}_{22}
	\end{bmatrix}.
	$$
\end{exmp}

%

\subsection{Partial trace operator}

We will define a linear operator, $\ptrace$, called as the partial trace, which generalizes 
the trace on matrices to tensors. 

\begin{defn}[Partial trace operator]
The partial trace, $\ptrace:\BB{R}^{R\times I_1\times I_2\times \cdots \times I_{N} \times R}
\rightarrow\BB{R}^{I_1\times I_2\times \cdots \times I_N}$, 
$N\geq 2$, is a linear operator defined by the tensor $\ten{Y}=\ptrace(\ten{X})$ with entries 
	\begin{equation}\label{def:tentr}
	\ten{Y}(i_1,i_2,\ldots,i_N) = 
	\sum_{r=1}^{R} \ten{X}\left(r,i_1,i_2,\ldots,i_N,r \right). 
	\end{equation}
\end{defn}

A more formal definition can be given by using the contracted product as 
		\begin{equation}\label{def:tentrByProd}
		\ptrace: \ten{X} \mapsto 
		\ptrace(\ten{X}) =
		\sum_{r=1}^{R} \BF{e}_{r}\modcon \ten{X} \modcon \BF{e}_{r}, 
		\end{equation}
where $\BF{e}_{r}=\left[ 0, \ldots, 0, 1, 0,\ldots, 0\right]^\Trps \in\BB{R}^{R}$
is the $r$th standard basis vector. 
The partial trace is a generalization of the matrix trace: for a matrix 
$\BF{A}\in\BB{R}^{R\times R}$, $\ptrace( \BF{A} ) = \text{trace}(\BF{A}) = \sum_{r=1}^R a_{rr} \in\BB{R}.$
For a tensor $\ten{X}\in\BB{R}^{R\times I_1\times I_2\times \cdots \times I_N\times R}$, 
the $(i_1,i_2,\ldots,i_{N})$th entry of $\ptrace(\ten{X})$ equals
to the matrix trace of the $(i_1,i_2,\ldots,i_{N})$th slice as
	\begin{equation}
	\left( \ptrace(\ten{X}) \right)_{i_1,i_2,\ldots,i_N}
	 = \text{trace}\left( \BF{X}_{:,i_1,i_2,\ldots,i_N,:}  \right). 
	\end{equation}
Figure~\ref{Fig_partial_trace}(a) illustrates a tensor network diagram representing 
the partial trace of a 7th-order tensor. 

\begin{figure}
\centering
\begin{tabular}{ccc}
\includegraphics[width=3cm]{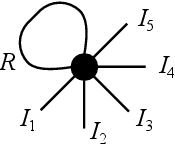}& \hspace{2pc} & 
\includegraphics[width=4.2cm]{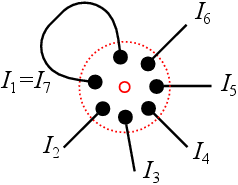}
\\
(a) & & (b)
\end{tabular}
\caption{\label{Fig_partial_trace}
Tensor network diagrams for (a) partial trace, 
$\ptrace ( \ten{X} )$, of a tensor $\ten{X}\in\BB{R}^{R\times I_1\times\cdots\times I_5\times R}$, and 
(b) partial trace of a rank-one tensor of order $N=7$
as described in Example~\ref{example_partial_trace_rank1}. 
}
\end{figure}

\begin{exmp}
\label{example_partial_trace_rank1}
The partial trace of a rank-one tensor can be expressed by, for $N\geq 1$, 
	$$
	\ptrace\left( \BF{a}^{(1)}\circ\BF{a}^{(2)}\circ\cdots
	\circ\BF{a}^{(N)} \right)
	= \langle \BF{a}^{(1)}, \BF{a}^{(N)} \rangle
	\cdot  \BF{a}^{(2)}\circ\cdots
	\circ\BF{a}^{(N-1)} , 
	$$
where $\langle \BF{v}, \BF{w} \rangle = \BF{v}^\Trps\BF{w}$, 
see, e.g., Figure~\ref{Fig_partial_trace}(b). 
\end{exmp}

\begin{exmp}
If $\ten{A}\in\BB{R}^{R_1\times I\times R_2}$, 
$\ten{B}\in\BB{R}^{R_2\times J\times R_3}$, and 
$\ten{C}\in\BB{R}^{R_3\times K\times R_1}$, then, 
	$$
	\left( \ptrace(\ten{A}\modcon\ten{B} \modcon\ten{C} ) \right)_{ijk} = 
	\text{trace}(\BF{A}_{:,i,:}\BF{B}_{:,j,:}\BF{C}_{:,k,:}) = 
	\left( \ptrace(\ten{C}\modcon \ten{B}\modcon\ten{A}) \right)_{kji}. 
	$$
\end{exmp}

\section{Tensor decompositions}

Tensor decomposition is an approximate representation of a tensor as contraction of a set of 
factor (core) tensors. See, e.g., \cite{EspHacHanSch2011,Hac2012} for more general definitions of 
tensor format and tensor representation in tensor product space. 
We will introduce several tensor decompositions (CP, Tucker, TT) in a consistent way, 
based on the notations and tensor operations defined in the previous section.

   


\subsection{CP and Tucker decompositions} 
\label{sec:cp_n_tucker}


%

The CANDECOMP/PARAFAC (CP) decomposition expresses a tensor 
as a sum of rank-one tensors:  a tensor $\ten{X}$ of size 
$I_1\times I_2\times \cdots\times I_N$ is written as 
	\begin{equation} \label{eqn:CPDelem}
	\ten{X} = 
	\sum_{r=1}^R \lambda_r \BF{a}^{(1)}_r 
	\circ \BF{a}^{(2)}_r \circ\cdots \circ \BF{a}^{(N)}_r, 
	\end{equation}
where $\BF{a}^{(n)}_r\in\BB{R}^{I_n}, r=1,\ldots,R,n=1,\ldots,N,$ are normalized 
vectors, $\lambda_r,r=1,\ldots,R,$ are weights, and $R\in\BB{N}$ is called the rank of the CP 
decomposition \eqref{eqn:CPDelem}. The above expression can be equivalently re-written 
in form of multilinear products as 
	\begin{equation} \label{eqn:CPD}
	\ten{X}
	 = 
	\ten{\Lambda} \times_1 \BF{A}^{(1)} \times_2 \BF{A}^{(2)} \ldots \times_N \BF{A}^{(N)}
	 = 
	\left\llbracket \ten{\Lambda} ; \BF{A}^{(1)}, \BF{A}^{(2)}, \ldots, \BF{A}^{(N)} \right\rrbracket, 
	\end{equation}
where $\ten{\Lambda}\in\BB{R}^{R\times \cdots \times R}$ is the superdiagonal tensor 
with diagonals $\lambda_1,\ldots,\lambda_R$, and 
$\BF{A}^{(n)}=[\BF{a}^{(n)}_1,\ldots,\BF{a}^{(n)}_R]$ are $I_n\times R_n$
factor matrices. 
Other alternative representations are summarized in Table \ref{Table:notation_CP_n_Tucker}. 
The Khatri-Rao (columnwise Kronecker) product of two matrices 
$\BF{A}\in\BB{R}^{I\times R}$ and $\BF{B}\in\BB{R}^{J\times R}$ is denoted by 
$\BF{C}= \BF{A}\odot\BF{B}\in\BB{R}^{IJ\times R}$ with columns 
$\BF{c}_r = \BF{a}_r\otimes \BF{b}_r$, $r=1,\ldots,R$.

\begin{table} 
\centering
\caption{\label{Table:notation_CP_n_Tucker}
Various representations for the CP and Tucker decompositions
of a tensor $\ten{X}$ of size $I_1\times I_2\times\cdots\times I_N$. }
\vspace{.5pc}
\begin{tabular}{ll}
\hline\\[-0.8pc]
\multicolumn{1}{c}{CP}   &    \multicolumn{1}{c}{Tucker}\\
\hline
\multicolumn{2}{c}{Multilinear product} \\
	$\displaystyle
	\ten{X} = \ten{\Lambda} \times_1 \BF{A}^{(1)} \cdots \times_N \BF{A}^{(N)} 
	$
	& 
	$\displaystyle
	\ten{X} = \ten{G} \times_1 \BF{A}^{(1)} \cdots \times_N \BF{A}^{(N)}
	$
\\[0.2pc]
	$\displaystyle
	\ten{X} = \left\llbracket \ten{\Lambda}; \BF{A}^{(1)}, \ldots, \BF{A}^{(N)} \right\rrbracket 
	$
	& 
	$\displaystyle
	\ten{X} = \left\llbracket \ten{G}; \BF{A}^{(1)},\ldots,\BF{A}^{(N)} \right\rrbracket
	$
\\[0.4pc]
\hline
\multicolumn{2}{c}{Outer product} \\
	$\displaystyle  
	\ten{X} = \sum_{r=1}^R \lambda_r \BF{a}^{(1)}_r \circ \cdots \circ \BF{a}^{(N)}_r
	$
	& 
	$\displaystyle  
	\ten{X} = \sum_{r_1=1}^{R_1} \cdots\sum_{r_N=1}^{R_N} g_{r_1,\ldots,r_N} \BF{a}^{(1)}_{r_1} \circ\cdots \circ \BF{a}^{(N)}_{r_N}
	$
\\
\hline 
\multicolumn{2}{c}{Scalar product} \\
	$\displaystyle  
	x_{i_1,\ldots,i_N} = \sum_{r=1}^R \lambda_r a^{(1)}_{i_1,r} \cdots a^{(N)}_{i_N,r}
	$
	& 
	$\displaystyle  
	x_{i_1,\ldots,i_N} = \sum_{r_1=1}^{R_1} \cdots\sum_{r_N=1}^{R_N} g_{r_1,\ldots,r_N} a^{(1)}_{i_1,r_1} \cdots a^{(N)}_{i_N,r_N}
	$
\\
\hline 
\multicolumn{2}{c}{Slice representation*} \\
	$\displaystyle 
	\BF{X}_{:,:,i_3,\ldots,i_N} = \BF{A}^{(1)} \widetilde{\BF{D}}_{i_3,\ldots,i_N} \BF{A}^{(2)\text{T}}
	$
	& 
	$\displaystyle 
	\BF{X}_{:,:,i_3,\ldots,i_N} = \BF{A}^{(1)}\widetilde{\BF{G}}_{i_3,\ldots,i_N} \BF{A}^{(2)\text{T}}
	$
\\
\hline 
\multicolumn{2}{c}{Vectorization**} \\
	$\displaystyle 
	\text{vec}(\ten{X}) = \left( \bigodot_{n=1}^N \BF{A}^{(n)} \right) \boldsymbol\lambda
	$
	& 
	$\displaystyle 
	\text{vec}(\ten{X}) = \left( \bigotimes_{n=1}^N \BF{A}^{(n)} \right) \text{vec}(\ten{G})
	$
\\
\hline 
\multicolumn{2}{c}{Matricization**} \\
	$\displaystyle 
	\BF{X}_{(n)} = \BF{A}^{(n)} \boldsymbol\Lambda 
	\left( \bigodot_{\myatop{m=1}{m\neq n}}^N \BF{A}^{(m)} \right)^\text{T}
	$
	& 
	$\displaystyle 
	\BF{X}_{(n)} = \BF{A}^{(n)} \BF{G}_{(n)}
	\left( \bigotimes_{\myatop{m=1}{m\neq n}}^N \BF{A}^{(m)} \right)^\text{T}
	$
\\
	$\displaystyle 
	\BF{X}_{\unfoldto{n}} = \left( \bigodot_{m=1}^n \BF{A}^{(m)} \right) \boldsymbol\Lambda 
	\left( \bigodot_{m=n+1}^N \BF{A}^{(m)} \right)^\text{T}
	$
	&
	$\displaystyle 
	\BF{X}_{\unfoldto{n}} = \left( \bigotimes_{m=1}^n \BF{A}^{(m)} \right) \BF{G}_{\unfoldto{n}}
	\left( \bigotimes_{m=n+1}^N \BF{A}^{(m)} \right)^\text{T}
	$
\\
\hline\\[-0.7pc]
\multicolumn{2}{l}{
	* $\widetilde{\BF{D}}_{i_3,\ldots,i_N} = \text{diag}(\tilde{d}_{11},\ldots,\tilde{d}_{RR}) \in\BB{R}^{R\times R}$ with diagonals
	$\tilde{d}_{rr}=\lambda_ra^{(3)}(i_3,r)\cdots a^{(N)}(i_N,r)$.}
\\
\multicolumn{2}{l}{
	* $\widetilde{\BF{G}}_{i_3,\ldots,i_N} =
	\sum_{r_3}\cdots\sum_{r_N} a^{(3)}_{i_3,r_3}\cdots a^{(N)}_{i_N,r_N} \BF{G}_{:,:,r_3,\ldots,r_N}$
	is the sum of frontal slices.}
\\
\multicolumn{2}{l}{
	** $\BF{A}\odot\BF{B}$ stands for the Khatri-Rao (columnwise Kronecker) product of matrices
	$\BF{A}$ and $\BF{B}$.}
\end{tabular}
\end{table}

The Tucker decomposition \cite{tucker66}   decomposes a tensor into 
a core tensor multiplied by a factor matrix on each mode as
	\begin{equation}\label{eqn:Tucker}
	\ten{X} 
	= \ten{G}\times_1\BF{A}^{(1)} \times_2\BF{A}^{(2)}
	\cdots\times_N\BF{A}^{(N)}
	= \left\llbracket 
	\ten{G} ; \BF{A}^{(1)} , \BF{A}^{(2)}, \ldots, 
	\BF{A}^{(N)} \right\rrbracket, 
	\end{equation}
where $\ten{G}\in\BB{R}^{R_1\times R_2\times\cdots\times R_N}$
is a core tensor, $\BF{A}^{(n)}\in\BB{R}^{I_n\times R_n}$
are factor matrices, and $(R_1,\ldots,R_N)$ is called the (multilinear) rank 
of the Tucker decomposition \eqref{eqn:Tucker}. 
The Tucker decomposition 
can also be represented as a sum of rank-one tensors 
as 
	\begin{equation}
	\ten{X} = 
	\sum_{r_1=1}^{R_1} \sum_{r_2=1}^{R_2} \cdots\sum_{r_N=1}^{R_N} 
	g_{r_1,r_2,\ldots,r_N} \BF{a}^{(1)}_{r_1} \circ \BF{a}^{(2)}_{r_2} \circ
	\cdots \circ \BF{a}^{(N)}_{r_N}, 
	\end{equation}
where $\BF{a}^{(n)}_{r_n} \in\BB{R}^{I_n}$ is the $r_n$th column of $\BF{A}^{(n)}$. 
Alternative representations for the Tucker decomposition are 
summarized in Table \ref{Table:notation_CP_n_Tucker}. 

In general, the CP can be regarded as a special case of the Tucker in the sense that 
the CP is a Tucker decomposition with a superdiagonal core tensor, e.g., 
see \eqref{eqn:CPD} and \eqref{eqn:Tucker}, where the multilinear rank is $(R,R,\ldots,R)$. 


The CP and Tucker decompositions can be illustrated by tensor network diagrams 
as in Figure~\ref{Fig:CP_n_Tucker}. Although the CP decomposition was illustrated 
by a network diagram in the figure, it is often not classified as a 
{\em tensor network format} 
when $N\geq 3$ and $R\geq 2$ in a strict sense \cite{EspHacHanSch2011},  
due to the superdiagonality of the core tensor $\ten{\Lambda}$.

\begin{figure}
\centering
\begin{tabular}{ccc}
\includegraphics[width=4cm]{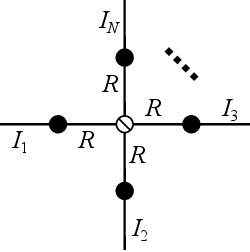}  & \hspace{2pc} & 
\includegraphics[width=4cm]{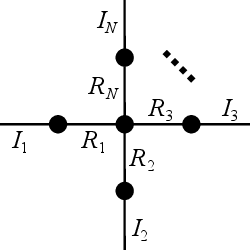}  \\
(a) CP decomposition & & (b) Tucker decomposition
\end{tabular}
\caption{\label{Fig:CP_n_Tucker}
Tensor network diagrams for the (a) CP and (b) Tucker decompositions of 
an $N$th-order tensor $\ten{X}\in\BB{R}^{I_1\times \cdots \times I_N}$. 
It is shown that the CP has a superdiagonal core tensor (denoted by 
a white circle with slash).}
\end{figure}

On the other hand, the higher-order singular value decomposition (HOSVD) \cite{DelathMoorVan2000} of a tensor produces a Tucker decomposition 
with orthogonalized factor matrices and a core tensor. 
All-orthogonality of the core tensor of Tucker decomposition is 
defined as follows: 
\begin{defn}[All-orthogonality, \cite{DelathMoorVan2000}]
An $N$th-order tensor $\ten{G}\in\BB{R}^{R_1\times \cdots \times R_N}$ is called 
all-orthogonal if 
	$$
	\BF{G}_{(n)}\BF{G}_{(n)}^\Trps = \BF{\Lambda}^{(n)}
	$$ 
for some diagonal matrices $\BF{\Lambda}^{(n)}=\text{diag}(\lambda^{(n)}_1,\ldots,\lambda^{(n)}_{R_n})\in\BB{R}^{R_n\times R_n}$ 
with diagonals $\lambda^{(n)}_1\geq \cdots \geq \lambda^{(n)}_{R_n}\geq 0$
for all $n=1,\ldots,N$. 
\end{defn}

\subsection{Tensor train (TT) decomposition}
\label{sec:ttd}

By the tensor train (TT) decomposition, a tensor 
$\ten{X}\in\BB{R}^{I_1\times I_2\times\cdots\times I_N}$ is represented as
contracted products
	\begin{equation}\label{eqn:TTcontract}
	\ten{X} = 
	\ten{G}^{(1)}\modcon \ten{G}^{(2)}
	\modcon\cdots \modcon \ten{G}^{(N)}, 
	\end{equation}
where $\ten{G}^{(n)}$ are 3rd-order core (factor) tensors called the TT-cores
with sizes $R_{n-1}\times I_n\times R_n$, $n=1,\ldots,N$, the integers 
$R_1,\ldots,R_{N-1}$ are called the TT-ranks, and  we assume that $R_0=R_N=1$. 
For notational convenience, we consider the 1st and the $N$th TT-cores as 3rd-order tensors unless stated otherwise.  
Figure~\ref{Fig:TTnotNormalized} illustrates the tensor network diagram for the
TT decomposition of a $7$th-order tensor. 

The TT decomposition can alternatively be written entrywise as products of 
slice matrices 
	\begin{equation}\label{eqn:TTentrywise}
	x_{i_1,i_2,\ldots,i_N} 
	= 
	\BF{G}^{(1)}_{i_1} \BF{G}^{(2)}_{i_2}
	\cdots \BF{G}^{(N)}_{i_N},
	\end{equation}
where $\BF{G}^{(n)}_{i_n}=\ten{G}^{(n)}(:,i_n,:)\in\BB{R}^{R_{n-1}\times R_n}$
is the lateral slice of the $n$th TT-core, $n=1,\ldots,N$. In this case, 
we suppose that $\BF{G}^{(1)}_{i_1} \in\BB{R}^{1\times R_1}$ and 
$\BF{G}^{(N)}_{i_N} \in\BB{R}^{R_{N-1}\times 1}$ are row and column vectors. 
The TT decomposition can also be represented by a sum of outer products
	\begin{equation}\label{eqn:TTouter}
	\ten{X} = 
	\sum_{r_1=1}^{R_1}\sum_{r_2=1}^{R_2}\cdots
	\sum_{r_{N-1}=1}^{R_{N-1}}
	\BF{g}^{(1)}_{1,r_1} \circ
	\BF{g}^{(2)}_{r_1,r_2} \circ
	\cdots \circ
	\BF{g}^{(N-1)}_{r_{N-2},r_{N-1}} \circ
	\BF{g}^{(N)}_{r_{N-1},1},
	\end{equation}
where $\BF{g}^{(n)}_{r_{n-1},r_n}=\ten{G}^{(n)}(r_{n-1},:,r_n)\in\BB{R}^{I_n}$
are the mode-2 fibers. 

\begin{figure}
\centering
\includegraphics[width = 7.3cm]{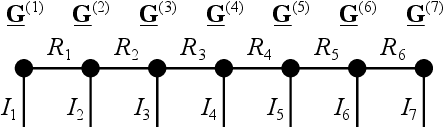}
\caption{\label{Fig:TTnotNormalized}
Tensor network diagram for TT decomposition of a 7th-order tensor $\ten{X}\in\BB{R}^{I_1\times I_2\times \cdots \times I_7}$. }
\end{figure}

Table~\ref{Table:notation_TT_global_n_recursive} summarizes 
the various representations for the TT decomposition.

\begin{table} 
\centering
\caption{\label{Table:notation_TT_global_n_recursive}
Various representations for the TT decomposition 
of a tensor $\ten{X}$ of size $I_1\times\cdots\times I_N$, 
either in global representation (left column) or in recursive representation (right column).
$\ten{X} = \ten{G}^{\leq N} = \ten{G}^{\geq 1}.$}
\vspace{.5pc}
\begin{tabular}{ll}
\hline\\[-0.8pc]
\multicolumn{1}{c}{TT (global)}   &   \multicolumn{1}{c}{TT (recursive)}\\
\hline
\multicolumn{2}{c}{Contracted product} \\
	$\displaystyle
	\ten{X} = 	\ten{G}^{(1)}\modcon \ten{G}^{(2)}  \modcon\cdots \modcon \ten{G}^{(N)}
	$
	& 
	$\displaystyle
	\ten{G}^{\leq n} = \ten{G}^{\leq n-1} \modcon \ten{G}^{(n)}
	$ 
\\[0.2pc]
	&
	$\displaystyle
	\ten{G}^{\geq n} =  \ten{G}^{(n)} \modcon \ten{G}^{\geq n+1}
	$ 
\\
\hline 
\multicolumn{2}{c}{Outer product} \\
	$\displaystyle 
	\ten{X} = 	
		\sum_{r_1,\ldots,r_{N-1}=1}^{R_1,\ldots,R_{N-1}} 
		\BF{g}_{1,r_1}^{(1)}\circ \BF{g}_{r_1,r_2}^{(2)}
		\circ \cdots \circ \BF{g}_{r_{N-1},1}^{(N)}
	$
	&
	$\displaystyle 
		\ten{G}^{\leq n}_{r_n} = 
		\sum_{r_{n-1}=1}^{R_{n-1}} \ten{G}^{\leq n-1}_{r_{n-1}}
		\circ \BF{g}_{r_{n-1},r_n}^{(n)}
	$
\\
	& 
	$\displaystyle 
		\ten{G}^{\geq n}_{r_{n-1}} = 
		\sum_{r_n=1}^{R_n} \BF{g}_{r_{n-1},r_{n}}^{(n)}\circ
		\ten{G}^{\geq n+1}_{r_{n}}
	$
\\
\hline
\multicolumn{2}{c}{Scalar product} \\
	$\displaystyle
	x_{i_1,i_2,\ldots,i_N} = 
	$
	&
	$\displaystyle
	g^{\leq n}_{1,i_1,\ldots,i_n,r_n} = 
	$
\\
		\quad 
		$\displaystyle
		\sum_{r_1,\ldots,r_{N-1}=1}^{R_1,\ldots,R_{N-1}}
		g^{(1)}_{1,i_1,r_1} g^{(2)}_{r_1,i_2,r_2}
		\cdots g^{(N)}_{r_{N-1},i_N,1}
		$
	&	
		\quad
		$\displaystyle
		\sum_{r_{n-1}=1}^{R_{n-1}} 
		g^{\leq n-1}_{1,i_1,\ldots,i_{n-1},r_{n-1}}
		g^{(n)}_{r_{n-1},i_n,r_n}
		$
\\
	& 
	$\displaystyle
	g^{\geq n}_{r_{n-1}, i_n,\ldots,i_N,1} = 
	$
\\
	&
		\quad
		$\displaystyle
		\sum_{r_n=1}^{R_n} 
		g^{(n)}_{r_{n-1},i_n,r_n}
		g^{\geq n+1}_{r_n,i_{n+1},\ldots,i_N,1}
		$
\\
\hline 
\multicolumn{2}{c}{Matrix product} \\
	$\displaystyle 
	x_{i_1,i_2,\ldots,i_N} = 
	\BF{G}^{(1)}_{i_1} 
	\BF{G}^{(2)}_{i_2}
	\cdots 	\BF{G}^{(N)}_{i_N}
	$
	&
	$\displaystyle 
	\BF{G}^{\leq n}_{i_1,\ldots,i_n} = 
	\BF{G}^{\leq n-1}_{i_1,\ldots,i_{n-1}} 
	\BF{G}^{(n)}_{i_n}
	$
\\[0.2pc]
	&
	$\displaystyle 
	\BF{G}^{\geq n}_{i_n,\ldots,i_N} = 
	\BF{G}^{(n)}_{i_n} 
	\BF{G}^{\geq n+1}_{i_{n+1},\ldots,i_N}
	$
\\[0.2pc]
\hline
\multicolumn{2}{c}{Vectorization} \\
	$\displaystyle 
	\text{vec}(\ten{X}) 
	= \prod_{n=N}^1
	\left( \BF{I}_{I_1I_2\cdots I_{n-1}} \otimes \BF{G}^{(n)\Trps}_{(1)} \right)
	$
	&
	$\displaystyle
	\text{vec} (\ten{G}^{\leq n} ) =
	$
\\[0.4pc]
	& 
		\quad
		$\displaystyle
		\left( \BF{I}_{I_1I_2\cdots I_{n-1}} \otimes 
			\BF{G}^{(n)\Trps}_{(1)} \right)
		\text{vec} \left(\ten{G}^{\leq n-1}\right)
		$
\\[0.4pc]
	$\displaystyle 
	\text{vec}(\ten{X}) 
	= \prod_{n=1}^N \left( \BF{G}^{(n)\Trps}_{(3)} \otimes \BF{I}_{I_{n+1}\cdots I_N} \right)
	$
	&
	$\displaystyle
	\text{vec}\left(\ten{G}^{\geq n}\right) = 
	$
\\[0.4pc]
	&	
		\quad
		$\displaystyle
		\left(  \BF{G}^{(n)\Trps}_{(3)} \otimes 
			\BF{I}_{I_{n+1}I_{n+2}\cdots I_N} \right)
		\text{vec}\left(\ten{G}^{\geq n+1}\right)
		$
\\[0.4pc]
	{
	$\displaystyle 
	\text{vec}(\ten{X}) = 
	\left( (\BF{G}^{<n}_{(n)} )^\Trps \otimes \BF{I}_{I_n}\otimes
		(\BF{G}^{>n}_{(1)} )^\Trps \right) 
		\text{vec} \left(\ten{G}^{(n)} \right)	
	$
	}
	&
	
\\[0.4pc]
\hline
\multicolumn{2}{c}{Matricization} \\
	$\displaystyle
	\BF{X}_{(n)} = 
	\BF{G}^{(n)}_{(2)} 
	\left( \BF{G}^{< n}_{(n)} \otimes \BF{G}^{>n}_{(1)} \right)
	$
	& 
	$\displaystyle
	\BF{G}^{<n}_{(n)} = \BF{G}^{(n-1)}_{(3)}
	\left( \BF{G}^{<n-1}_{(n-1)} \otimes \BF{I}_{I_{n-1}} \right)
	$
\\[0.4pc]
	$\displaystyle
	\BF{X}_{\unfoldto{n}} = \BF{G}^{\leq n}_{\unfoldto{n}} \BF{G}^{>n}_{(1)}
	= \left( \BF{G}^{< n+1}_{(n+1)} \right)^\Trps \BF{G}^{>n}_{(1)}
	$
	&
	$\displaystyle
	\BF{G}^{>n}_{(1)} = \BF{G}^{(n+1)}_{(1)}
	\left( \BF{I}_{I_{n+1}} \otimes \BF{G}^{>n+1}_{(1)} \right)
	$
\\[0.2pc]
\hline
\end{tabular}
\end{table}

\subsubsection{Recursive representations and vectorizations for TT decomposition}

The TT decomposition can be expressed in a recursive manner, which is summarized
in Table~\ref{Table:notation_TT_global_n_recursive}. Given the TT-cores $\ten{G}^{(n)},n=1,\ldots,N,$ 
from a TT decomposition \eqref{eqn:TTcontract}, we define the partial contracted 
products $\ten{G}^{\leq n}$ and $\ten{G}^{\geq n}$ as 
	\begin{equation}
	\begin{split}
	\ten{G}^{\leq n}
	& = 
	\ten{G}^{(1)}\modcon\ten{G}^{(2)}\modcon
	\cdots\modcon\ten{G}^{(n)}
	\in\BB{R}^{I_1\times\cdots\times I_n\times R_n}, 
	\\
	\ten{G}^{\geq n}
	& = 
	\ten{G}^{(n)}\modcon\ten{G}^{(n+1)}\modcon
	\ldots\modcon\ten{G}^{(N)} \in\BB{R}^{R_{n}\times I_{n+1}\times\cdots\times I_N}, 
	\end{split}
	\end{equation}
for $n=1,\ldots,N$. $\ten{G}^{< n}$ and $\ten{G}^{> n}$ are defined in the same way. 
For completeness, we define $\ten{G}^{<1} = \ten{G}^{>N} = 1$. 
Vectorization of the partial contracted products yield the following recursive equations
(see also Proposition~\ref{prop_modcon}(g)): 
	\begin{equation} \label{eqn_recursive_vec}
	\begin{split}
	\text{vec}\left(\ten{G}^{\leq n}\right)
	& = 
	\text{vec}\left(\ten{G}^{\leq n-1} \times^1 \ten{G}^{(n)} \right) \\
	& = 
	\left( \BF{I}_{I_1I_2\cdots I_{n-1}}  \otimes \BF{G}^{(n)\Trps}_{(1)} \right)
	\text{vec}\left(\ten{G}^{\leq n-1}\right), 
	\quad 
	n=2,3,\ldots,N, 
	\\
	\text{vec}\left(\ten{G}^{\geq n}\right)
	& = 
	\text{vec}\left(  \ten{G}^{(n)} \times^1 \ten{G}^{\geq n+1}  \right) \\
	& =  \left(\BF{G}^{(n)\Trps}_{(3)} \otimes \BF{I}_{I_{n+1}I_{n+2}\cdots I_N}\right)
	\text{vec}\left(\ten{G}^{\geq n+1}\right), 
	\quad 
	n=1,2,\ldots,N-1. 
	\end{split}
	\end{equation}
Since $\text{vec} ( \ten{X} ) = \text{vec}( \ten{G}^{\leq N} )
= \text{vec}( \ten{G}^{\geq 1} )$, we can obtain 
the following formulas. 

\begin{prop}
\label{prop_recursive_equations}
Let $\ten{X}\in\BB{R}^{I_1\times I_2\times \cdots \times I_N}$ admit the 
TT decomposition in \eqref{eqn:TTcontract}. Then
	\begin{enumerate}
	\item[(a)]
		$\displaystyle
		\text{vec}(\ten{X}) 
		= \prod_{n=N}^1 \left( \BF{I}_{I_1I_2\cdots I_{n-1}} \otimes \BF{G}^{(n)\Trps}_{(1)} \right), 
		$
		with 
		$
		\BF{I}_{I_1I_0} = 1. 
		$
	
	\item[(b)]
		$\displaystyle
		\text{vec}(\ten{X}) 
		= \prod_{n=1}^N
		\left( \BF{G}^{(n)\Trps}_{(3)} \otimes \BF{I}_{I_{n+1}I_{n+2}\cdots I_N} \right), 
		$
		with 
		$
		\BF{I}_{I_{N+1}I_N} = 1. 
		$
		
	\item[(c)]
		$\displaystyle
		\text{vec}\left( \ten{X} \right)
		= \left( \left(\BF{G}^{<n}_{(n)}\right)^\Trps
				\otimes \BF{I}_{I_n} \otimes
				\left(\BF{G}^{> n}_{(1)}\right)^\Trps 
		\right)
		\text{vec}\left( \ten{G}^{(n)} \right). 
		$
		
	\end{enumerate}
\end{prop}
\begin{proof}
(a) and (b) follow immediately from \eqref{eqn_recursive_vec} and
that $\BF{G}^{(1)\Trps}_{(1)} = \text{vec}(\ten{G}^{(1)})$, 
$\BF{G}^{(N)\Trps}_{(3)} = \text{vec}(\ten{G}^{(N)})$. 
(c) can be derived by using Proposition~\ref{prop_modcon}(g) as 
	\begin{equation*}
	\begin{split}
	\text{vec}\left( \ten{X} \right) 
	& = \text{vec}\left( \ten{G}^{<n} \modcon \left(\ten{G}^{(n)} \modcon \ten{G}^{>n} \right)\right)\\
	& = \left( \left(\BF{G}^{<n}_{(n)}\right)^\Trps \otimes \BF{I}_{I_nI_{n+1}\cdots I_N}  \right)
	\text{vec}\left( \ten{G}^{(n)} \modcon \ten{G}^{>n} \right)\\
	& = \left( \left(\BF{G}^{<n}_{(n)}\right)^\Trps \otimes \BF{I}_{I_nI_{n+1}\cdots I_N}  \right)
	\left( \BF{I}_{R_{n-1}I_n} \otimes \left(\BF{G}^{>n}_{(1)}\right)^\Trps \right)
	\text{vec}\left( \ten{G}^{(n)} \right)\\
	& = \left( \left(\BF{G}^{<n}_{(n)}\right)^\Trps
				\otimes \BF{I}_{I_n} \otimes
				\left(\BF{G}^{> n}_{(1)}\right)^\Trps 
		\right)
		\text{vec}\left( \ten{G}^{(n)} \right). 
	\end{split}
	\end{equation*}

\end{proof}

The expression in Proposition~\ref{prop_recursive_equations}(c)
shows that the $n$th TT-core can be separated 
from the others, see, e.g., \cite{Dol2013b,KresSteinUsh2014}.  
By defining the so-called frame matrix
	\begin{equation}\label{eqn:frame_01}
	\BF{X}^{\neq n} = 
	\left(\BF{G}^{<n}_{(n)}\right)^\Trps
		\otimes \BF{I}_{I_n} \otimes
		\left(\BF{G}^{> n}_{(1)}\right)^\Trps 
	\in\BB{R}^{I_1I_2\cdots I_N\times R_{n-1}I_nR_n}
	\end{equation}
for $n=1,\ldots,N$, it is simplified as 
	\begin{equation} \label{eqn_vectorize_3} 
	\text{vec}(\ten{X}) = \BF{X}^{\neq n} \text{vec}(\ten{G}^{(n)}). 
	\end{equation}
We can obtain a similar expression for the vectorization as in \eqref{eqn_vectorize_3}, 
where two neighboring TT-cores are separated from the other TT-cores as 
(see, e.g., \cite{KresSteinUsh2014})
	\begin{equation*} 
	\text{vec}\left(\ten{X}\right)
	= \BF{X}^{\neq n,n+1}\text{vec}\left(\ten{G}^{(n)}\modcon\ten{G}^{(n+1)} \right), 
	\end{equation*}	
where 
	\begin{equation}\label{eqn:frame_02}
	\BF{X}^{\neq n,n+1} = 
		\left(\BF{G}^{<n}_{(n)}\right)^\Trps
		\otimes \BF{I}_{I_n} 
		\otimes \BF{I}_{I_{n+1}} 
		\otimes \left(\BF{G}^{> n+1}_{(1)}\right)^\Trps 
	\in\BB{R}^{I_1I_2\cdots I_N\times R_{n-1}I_nI_{n+1}R_{n+1}}
	\end{equation}
for $n=1,2,\ldots,N-1.$

\subsubsection{Orthogonalization of core tensors}

Left- or right-orthogonalization of 3rd-order TT-cores $\ten{G}^{(n)}$ of TT decomposition is defined
in the following way, which helps improve convergence of an iterative method using 
TT decomposition and reduces computational costs \cite{Holtz2012}.  

\begin{defn}[Left- or right-orthogonality, \cite{Holtz2011}]
For a fixed $n=1,\ldots,N,$ the $n$th TT-core $\ten{G}^{(n)} \in\BB{R}^{R_{n-1}\times I_n\times R_n}$ 
is called left-orthogonal if 
	\begin{equation}\label{eqn:orth_left}
	\BF{G}^{(n)}_{(3)}  \BF{G}^{(n)\Trps}_{(3)}= 
	\BF{I}_{R_n},
	\end{equation}
and right-orthogonal if 
	\begin{equation}\label{eqn:orth_right}
	\BF{G}^{(n)}_{(1)}\BF{G}^{(n)\Trps}_{(1)} =
	\BF{I}_{R_{n-1}}. 
	\end{equation}
\end{defn}

During iterations in numerical algorithms, the TT-cores are 
kept either left- or right-orthogonal by SVD or QR decomposition \cite{Holtz2012}. 
For a fixed $n$, if all the TT-cores on the left, i.e., $\ten{G}^{(m)},m=1,\ldots,n-1,$ are 
left-orthogonal, then $\BF{G}^{<n}_{(n)}\in\BB{R}^{R_n \times I_1I_2\cdots I_{n-1}}$ 
has orthonormal rows. In the same way, if all the TT-cores on the right, 
i.e., $\ten{G}^{(m)},m=n+1,\ldots,N,$ are right-orthogonal, then 
$\BF{G}^{>n}_{(1)}\in\BB{R}^{R_n\times I_{n+1}I_{n+2}\cdots I_N}$ 
has orthonormal rows. The proof of the orthonormality of $\BF{G}^{<n}_{(n)}$ and 
$\BF{G}^{>n}_{(1)}$ can be given by using the tensor operations described 
in the previous section as follows.

\begin{prop} Let $n=1,2,\ldots,N$ be fixed. 
	\begin{enumerate}
	\item[(a)] If $\ten{G}^{(1)},\ten{G}^{(2)}, \ldots,\ten{G}^{(n-1)}$ are left-orthogonal, 
			then $\BF{G}^{<n}_{(n)}$ has orthonormal rows.
	\item[(b)] If $\ten{G}^{(n+1)},\ten{G}^{(n+2)},\ldots,\ten{G}^{(N)}$ are right-orthogonal, 
			then $\BF{G}^{>n}_{(1)}$ has orthonormal rows. 
	\end{enumerate}
\end{prop}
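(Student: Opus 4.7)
My plan is to prove both statements by induction: the first by induction on $n$ building up $\ten{G}^{<n}$ from the left, and the second by a mirror induction on $n$ running from $N$ downward and building up $\ten{G}^{>n}$ from the right. Since the two arguments are symmetric, I would only describe the first in detail; the second goes through identically once one swaps the recursion $\ten{G}^{<n}=\ten{G}^{<n-1}\bullet \ten{G}^{(n-1)}$ (from Table \ref{Table:notation_TT_local}) for $\ten{G}^{>n}=\ten{G}^{(n+1)}\bullet \ten{G}^{>n+1}$ and the left-orthogonality condition \eqref{eqn:orth_left} for right-orthogonality \eqref{eqn:orth_right}.

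The base case $n=1$ is immediate: by convention $\ten{G}^{<1}=1$ and $R_0=1$, so $\BF{G}^{<1}_{(1)}$ is the $1\times 1$ identity. For the inductive step, the key intermediate result I would establish first is the matricized recursion
$$\BF{G}^{<n}_{(n)} = \BF{G}^{(n-1)}_{(3)}\left(\BF{G}^{<n-1}_{(n-1)} \otimes \BF{I}_{I_{n-1}}\right),$$
which is the mode-$(n)$ unfolding of the contracted product $\ten{G}^{<n}=\ten{G}^{<n-1}\bullet \ten{G}^{(n-1)}$. This identity can be viewed as a transposed reformulation of the preceding contracted-product proposition, part (e), once one observes that $\BF{G}^{(n-1)}_{([2])}=(\BF{G}^{(n-1)}_{(3)})^{\RM{T}}$ under the multi-index conventions of Section 2.

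With this recursion in hand, the conclusion follows from a one-line Kronecker calculation. Using the identity $(A\otimes B)(C\otimes D)=AC\otimes BD$,
$$\BF{G}^{<n}_{(n)}\left(\BF{G}^{<n}_{(n)}\right)^{\RM{T}} = \BF{G}^{(n-1)}_{(3)}\left(\BF{G}^{<n-1}_{(n-1)}\left(\BF{G}^{<n-1}_{(n-1)}\right)^{\RM{T}} \otimes \BF{I}_{I_{n-1}}\right)\left(\BF{G}^{(n-1)}_{(3)}\right)^{\RM{T}}.$$
The inductive hypothesis reduces the inner factor to $\BF{I}_{R_{n-2}}\otimes\BF{I}_{I_{n-1}}=\BF{I}_{R_{n-2}I_{n-1}}$, and the left-orthogonality \eqref{eqn:orth_left} of $\ten{G}^{(n-1)}$ then yields $\BF{G}^{(n-1)}_{(3)}(\BF{G}^{(n-1)}_{(3)})^{\RM{T}}=\BF{I}_{R_{n-1}}$, completing the induction.

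The main obstacle I anticipate is verifying the matricized recursion cleanly. This amounts to checking that the multi-index $\overline{i_1\cdots i_{n-1}}$ factors consistently as $\overline{(\overline{i_1\cdots i_{n-2}})\,i_{n-1}}$ under the column ordering of the Kronecker product $\BF{G}^{<n-1}_{(n-1)}\otimes \BF{I}_{I_{n-1}}$, which is purely a consequence of the definition of the multi-index in Section 2 but requires care to spell out. Once that bookkeeping is settled, the remainder of the proof is the two-line Kronecker manipulation above, and the whole argument transposes to the right-orthogonal case without modification.
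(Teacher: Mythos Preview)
Your proof is correct and follows the same inductive skeleton as the paper, but the machinery you use in the inductive step is genuinely different. The paper deliberately carries out the induction using the tensor operations it has just introduced: it rewrites both the left-orthogonality condition and the quantity $\text{vec}(\BF{G}^{<k}_{(k)}(\BF{G}^{<k}_{(k)})^{\RM{T}})$ in terms of mode-$n$ and mode-$\bar{n}$ Kronecker products and mode-$n$ vector products, then reduces using the identity $\ten{G}^{<k}\otimes_k\ten{G}^{<k}=(\ten{G}^{<k-1}\otimes_{k-1}\ten{G}^{<k-1})\bullet(\ten{G}^{(k-1)}\otimes_{\bar 2}\ten{G}^{(k-1)})$. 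You instead establish the matricized recursion $\BF{G}^{<n}_{(n)}=\BF{G}^{(n-1)}_{(3)}(\BF{G}^{<n-1}_{(n-1)}\otimes\BF{I}_{I_{n-1}})$ and finish with a single application of $(A\otimes B)(C\otimes D)=AC\otimes BD$. Your route is more elementary and will be immediately readable to anyone comfortable with standard Kronecker algebra; the paper's route is there precisely to showcase that the new tensor-operation calculus can replace index bookkeeping, so it trades familiarity for a demonstration of the framework. The one place you should be explicit is exactly what you flagged: the multi-index factoring $\overline{i_1\cdots i_{n-1}}=\overline{(\overline{i_1\cdots i_{n-2}})\,i_{n-1}}$ and the matching column ordering $\overline{r_{n-2}i_{n-1}}$ of $\BF{G}^{(n-1)}_{(3)}$, both of which hold under the paper's multi-index convention.
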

\begin{proof}
We will prove (a) by induction.  (b) can be proved similarly. If $n=1$, then 
$\BF{G}^{<n}_{(n)} = \ten{G}^{<1} = 1$, so we have 
$\BF{G}^{<n}_{(n)} (\BF{G}^{<n}_{(n)})^\Trps = 1 = \BF{I}_{R_0}$ for $n=1$. 
Next, we suppose that $\BF{G}^{<k}_{(k)} (\BF{G}^{<k}_{(k)})^\Trps = \BF{I}_{R_{k-1}}$ 
under the left-orthogonality of $\ten{G}^{(1)},\ldots,\ten{G}^{(k-1)}$. 
Since $\ten{G}^{<k+1} = \ten{G}^{<k} \modcon \ten{G}^{(k)}$ for $k\leq N$, 
we can get the recursive expression (see also
Proposition~\ref{prop_modcon}(f) and   Table~\ref{Table:notation_TT_global_n_recursive})
	\begin{equation}
	\BF{G}^{<k+1}_{(k+1)} = \BF{G}^{(k)}_{(3)} \left( \BF{G}^{<k}_{(k)} \otimes \BF{I}_{I_k} \right). 
	\end{equation}
Since $\BF{G}^{<k}_{(k)} (\BF{G}^{<k}_{(k)})^\Trps = \BF{I}_{R_{k-1}}$, we have 
	$$
	\BF{G}^{<k+1}_{(k+1)} \left( \BF{G}^{<k+1}_{(k+1)} \right)^\Trps = 
	\BF{G}^{(k)}_{(3)} 
	\left( \BF{G}^{<k}_{(k)} (\BF{G}^{<k}_{(k)})^\Trps \otimes \BF{I}_{I_k} \right)
	\BF{G}^{(k)\Trps}_{(3)} 
	= \BF{G}^{(k)}_{(3)} \BF{G}^{(k)\Trps}_{(3)}. 
	$$
From the definition of the left-orthogonality \eqref{eqn:orth_left} of $\ten{G}^{(k)}$,  
we can conclude that $\BF{G}^{<k+1}_{(k+1)} ( \BF{G}^{<k+1}_{(k+1)} )^\Trps = \BF{I}_{R_k}$
under the left-orthogonality of $\ten{G}^{(1)},\ldots,\ten{G}^{(k)}$. 
\end{proof}

Note that the frame matrix $\BF{X}^{\neq n}$ \eqref{eqn:frame_01} will have 
orthonormal columns if both $\BF{G}^{<n}_{(n)}$ and $\BF{G}^{>n}_{(1)}$ 
have orthonormal rows. The same argument holds for the frame matrix 
$\BF{X}^{\neq n,n+1}$ \eqref{eqn:frame_02} with $\BF{G}^{<n}_{(n)}$ and 
$\BF{G}^{>n+1}_{(1)}$. 

Figure \ref{Fig:TT}(a),(b), and (c) show tensor network diagrams 
for the TT decomposition of a 4th-order tensor. The left-orthogonalized or 
right-orthogonalized core tensors are represented 
by half-filled circles. For example, 
Figure \ref{Fig:TT}(a) shows that the 
three among the four core tensors have been 
left-orthogonalized. 

\begin{figure}
\centering
\begin{tabular}{ccc}
\includegraphics[width=4cm]{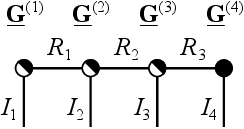}
&
\includegraphics[width=4cm]{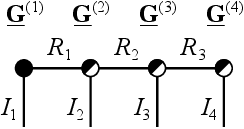}
&
\includegraphics[width=4cm]{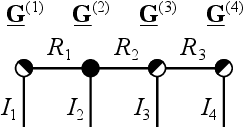}
\\
(a)&(b)&(c)
\end{tabular}
\caption{\label{Fig:TT}Tensor network diagrams for TT decomposition 
of a 4th-order tensor $\ten{X}\in\BB{R}^{I_1\times I_2\times I_3\times I_4}$. 
In each figure, three among the four TT-cores have been
(a) left-orthogonalized, (b) right-orthogonalized, 
or (c) left- or right-orthogonalized.}
\end{figure}

\subsubsection{Uniqueness of TT-ranks and minimal TT decomposition}

Note that the TT-ranks $R_1,\ldots,R_{N-1}$ in the TT decomposition 
\eqref{eqn:TTcontract} may not be unique, and depend on the decomposition 
rather than the tensor $\ten{X}$ itself. 

Mathematical (geometric) properties of TT decomposition
are closely related to the so-called separation ranks \cite{Holtz2011}. 
The $n$th separation rank, $S_n$, of a tensor 
$\ten{X}\in\BB{R}^{I_1\times\cdots\times I_N}$ is defined as the rank 
of the $n$th canonical unfolding (i.e., mode-($1,\ldots,n$) matricization) $\BF{X}_{\unfoldto{n}}$, i.e., 
	$$
	S_n = \text{rank}\left(\BF{X}_{\unfoldto{n}}\right). 
	$$
By looking at the canonical unfolding of the TT decomposition \eqref{eqn:TTcontract}, 
we can show the following relationship between the separation ranks and the TT-ranks:
	$$
	S_n\leq R_n
	$$
for $n=1,\ldots,N-1$. 

The TT decomposition \eqref{eqn:TTcontract} is called 
minimal or fulfilling the full-rank condition \cite{Holtz2011} if all 
TT-cores have full left and right ranks, i.e., 
	$$
	R_n = \text{rank}\left( \BF{G}^{(n)}_{(3)} \right)
	\quad
	\text{and}
	\quad
	R_{n-1} = \text{rank}\left( \BF{G}^{(n)}_{(1)} \right), 
	$$
for all $n=1,\ldots,N$. 
Holtz, Rohwedder, and Schneider \cite{Holtz2011} proved that 
the TT-ranks of minimal TT decompositions for a tensor $\ten{X}$ are unique, 
that is, if $\ten{X}$ admits for a minimal TT decomposition
with TT-ranks $R_n$, then it holds that
	$$
	R_n = S_n
	$$
for $n=1,\ldots,N-1$. Therefore, the TT-ranks of a tensor $\ten{X}$ can be defined as 
the TT-ranks of a minimal TT decomposition of $\ten{X}$, i.e., $R_n=\text{rank}(\BF{X}_{\unfoldto{n}})$.

Let $\BB{T}_\text{TT}(R_1,\ldots,R_{N-1})$ denote the set of tensors of TT-ranks 
bounded by $(R_1,\ldots,R_{N-1})$. It has been shown that 
$\BB{T}_\text{TT}(R_1,\ldots,R_{N-1})$ is closed \cite{FalHac2012,Hac2012}, 
which implies that there exists a best TT-ranks-$(R_1,\ldots,R_{N-1})$ 
approximation of any tensor. 
Oseledets \cite{Ose2011} proved that the TT-SVD algorithm proposed in \cite{Ose2011}
can return a quasi-optimal TT approximation of a given tensor $\ten{Y}\in\BB{R}^{I_1\times 
\cdots \times I_N}$ in principle. That is, for a tensor $\ten{Y}$, the TT-SVD algorithm returns
$\widehat{\ten{X}}\in\BB{T}_\text{TT}(R_1,\ldots,R_{N-1})$ such that
	$$
	\left\| \ten{Y}-\widehat{\ten{X}} \right\|_\text{F}
	\leq \sqrt{N-1} \cdot
	\min_{\ten{X}
	\in\BB{T}_\text{TT}(R_1,\ldots,R_{N-1})} 
	\left\| \ten{Y} - \ten{X} \right\|_\text{F}, 
	$$
where $\|\cdot\|_\RM{F}$ is the Frobenius norm. Moreover, 
the TT-SVD algorithm yields a minimal TT decomposition 
of a tensor $\ten{Y}$ \cite{Holtz2011,Ose2011}. 

Unlike the Tucker decomposition, the TT decomposition 
requires a storage cost of $\CL{O}(NIR^2)$, for 
$I=\max(I_n)$ and $R=\max(R_n)$, that is linear with the order $N$. 
Many algorithms based on TT decomposition 
such as the approximation, truncation, contraction, and solution to linear systems 
also have computational complexity linear with $N$. Further mathematical 
properties of TT-based algorithms will be developed in the next section.

\subsection{TT decomposition for large-scale vectors and matrices}
\label{sec_TT_vec_mat}

We will introduce variations of TT decomposition for representing 
large-scale vectors, matrices, and low-order tensors. 
Various representations for TT decomposition of vectors and matrices 
are summarized in Table~\ref{Table:notation_TT_vector_n_matrix}. 

\begin{table} 
\centering
\caption{\label{Table:notation_TT_vector_n_matrix}
Various representations for TT decomposition 
of a large-scale vector $\BF{x}\in\BB{R}^{I_1\cdots I_N}$
and a matrix $\BF{A}\in\BB{R}^{I_1\cdots I_N\times J_1\cdots J_N}$.}
\vspace{.5pc}
\begin{tabular}{ll}
\hline\\[-0.8pc]
\multicolumn{1}{c}{Vector TT}   &   \multicolumn{1}{c}{Matrix TT} \\
\hline
\multicolumn{2}{c}{Strong Kronecker product} \\
	$\displaystyle
	\BF{x} = 
	\widetilde{\BF{X}}^{(1)} \skron
	\widetilde{\BF{X}}^{(2)} \skron
	\cdots \skron
	\widetilde{\BF{X}}^{(N)}, 
	$
	&
	$\displaystyle
	\BF{A} = \widetilde{\BF{A}}^{(1)}  \skron 
	\widetilde{\BF{A}}^{(2)}  \skron 
	\cdots  \skron 
	\widetilde{\BF{A}}^{(N)}, 
	$
\\
		\quad 
		$\widetilde{\BF{X}}^{(n)} 
		= \begin{bmatrix}\BF{x}_{r_{n-1},r_n}^{(n)}\end{bmatrix}
		\in\BB{R}^{R_{n-1}I_n\times R_n}$ 
	&
		\quad 
		$\displaystyle 
		\widetilde{\BF{A}}^{(n)} = \begin{bmatrix}
		\BF{A}_{r^A_{n-1},r^A_n}^{(n)}\end{bmatrix}
		\in\BB{R}^{R^A_{n-1}I_n\times R^A_nJ_n}$ 
\\
		\quad 
		with each block $\BF{x}_{r_{n-1},r_n}^{(n)}
		\in\BB{R}^{I_n}$
	& 
		\quad 
		with each block $\BF{A}_{r^A_{n-1},r^A_n}^{(n)}
		\in\BB{R}^{I_n\times J_n}$
\\
\hline 
\multicolumn{2}{c}{Kronecker product} \\
	$\displaystyle
	\BF{x} = 
	\sum_{r_1,\ldots,r_{N-1}=1}^{R_1,\ldots,R_{N-1}} 
	\BF{x}^{(1)}_{1,r_1} \otimes
	\BF{x}^{(2)}_{r_1,r_2} \otimes
	\cdots \otimes
	\BF{x}^{(N)}_{r_{N-1},1}
	$
	&
	$\displaystyle 
	\BF{A} = 
	\sum_{r^A_1,\ldots,r^A_{N-1}=1}^{R^A_1,\ldots,R^A_{N-1}} 
	\BF{A}_{1,r^A_1}^{(1)} \otimes 
	\BF{A}_{r^A_1,r^A_2}^{(2)}
	\otimes \cdots \otimes 
	\BF{A}_{r^A_{N-1},1}^{(N)}$
\\
\hline
\multicolumn{2}{c}{Contracted product} \\
	$\displaystyle
	\ten{X} = \ten{X}^{(1)} \modcon \ten{X}^{(2)} \modcon \cdots 
	\modcon \ten{X}^{(N)}
	$
	&
	$\displaystyle
	\ten{A} = \ten{A}^{(1)} \modcon \ten{A}^{(2)} \modcon \cdots 
	\modcon \ten{A}^{(N)}
	$
\\
\hline
\multicolumn{2}{c}{Outer product} \\
	$\displaystyle 
	\ten{X} = 	
		\sum_{r_1,\ldots,r_{N-1}=1}^{R_1,\ldots,R_{N-1}} 
		\BF{x}_{1,r_1}^{(1)}\circ \BF{x}_{r_1,r_2}^{(2)}
		\circ \cdots \circ \BF{x}_{r_{N-1},1}^{(N)}
	$
	&
	$\displaystyle
	\ten{A} = 
		\sum_{r^A_1,\ldots,r^A_{N-1}=1}^{R^A_1,\ldots,R^A_{N-1}} 
		\BF{A}_{1,r^A_1}^{(1)} \circ 
		\BF{A}_{r^A_1,r^A_2}^{(2)}
		\circ \cdots \circ 
		\BF{A}_{r^A_{N-1},1}^{(N)}
	$
\\	
\hline
\multicolumn{2}{c}{Scalar product} \\
	$\displaystyle
	x_{i_1,\ldots,i_N} = \BF{x}(\overline{i_1\cdots i_N}) = 
	$
	&
	$\displaystyle
	a_{i_1,j_1,i_2,j_2,\ldots,i_N,j_N} = \BF{A}(\overline{i_1\cdots i_N},\overline{j_1\cdots j_N})
	= 
	$
\\
		\quad 
		$\displaystyle
		\sum_{r_1,\ldots,r_{N-1}=1}^{R_1,\ldots,R_{N-1}}
		x^{(1)}_{1,i_1,r_1} x^{(2)}_{r_1,i_2,r_2}\cdots x^{(N)}_{r_{N-1},i_N,1}
		$
	& 
		\quad
		$\displaystyle
		\sum_{r^A_1,\ldots,r^A_{N-1}=1}^{R^A_1,\ldots,R^A_{N-1}} 
		a^{(1)}_{1,i_1,j_1,r^A_1} a^{(2)}_{r^A_1,i_2,j_2,r^A_2} \cdots a^{(N)}_{r^A_{N-1},i_N,j_N,1}
		$
\\
\hline
\multicolumn{2}{c}{Matrix product} \\
	$\displaystyle 
	x_{i_1,\ldots,i_N} = 
	\BF{X}^{(1)}_{i_1} 
	\BF{X}^{(2)}_{i_2}
	\cdots 	\BF{X}^{(N)}_{i_N}
	$
	&
	$\displaystyle 
	a_{i_1,j_1,\ldots,i_N,j_N} = 
	\BF{A}^{(1)}_{i_1,j_1}
	\BF{A}^{(2)}_{i_2,j_2}
	\cdots 
	\BF{A}^{(N)}_{i_N,j_N}
	$
\\[0.2pc]
\hline 
\multicolumn{2}{c}{Vector/matrix representation} \\
	$\displaystyle 
	\BF{x}
		= \prod_{n=N}^1
		\left( \BF{I}_{I_1I_2\cdots I_{n-1}} \otimes \BF{X}^{(n)\Trps}_{(1)} \right)
	$
	&
	$\displaystyle
	\BF{A} = \prod_{n=1}^N 
	\left( \BF{I}_{J_1J_2\cdots J_{n-1}} \otimes \BF{A}^{(n)}_{\unfoldto{2}} 
	\otimes \BF{I}_{I_{n+1}I_{n+2}\cdots I_N} \right)
	$ 
\\
	$\displaystyle 
	\BF{x} = 
	\prod_{n=1}^N \left( \BF{X}^{(n)\Trps}_{(3)} \otimes \BF{I}_{I_{n+1}\cdots I_N} \right)
	$
	&
\\
	$\displaystyle 
	\BF{x} = \left( (\BF{X}^{<n}_{(n)} )^\Trps \otimes \BF{I}_{I_n}\otimes
		(\BF{X}^{>n}_{(1)} )^\Trps\right) 
		\text{vec}\left( \ten{X}^{(n)} \right)
	$
	&	
\\
\hline
\end{tabular}
\end{table}

A large-scale vector $\BF{x}$ of length $I_1I_2\cdots I_N$ 
can be considered to be reshaped into a higher-order tensor $\ten{X}\in\BB{R}^{I_1\times I_2\times \cdots \times I_N}$
and approximately represented by TT decomposition as
	\begin{equation} \label{eqn_vectorTT_tensor_modcon}
	\ten{X} = \ten{X}^{(1)} \modcon \ten{X}^{(2)} \modcon \cdots 
	\modcon \ten{X}^{(N)}, 
	\end{equation}
where $\ten{X}^{(n)} \in\BB{R}^{R_{n-1}\times I_n\times R_n}$ are 
3rd-order TT-cores. 
Vectorization of the above TT decomposition
yields the TT decomposition for $\BF{x}$, expressed as a sum of Kronecker products (e.g., see \eqref{eqn:TTouter})
	\begin{equation} \label{eqn_TT_vector_kron}
	\BF{x} = 
	\sum_{r_1=1}^{R_1}\sum_{r_2=1}^{R_2}\cdots
	\sum_{r_{N-1}=1}^{R_{N-1}}
	\BF{x}^{(1)}_{1,r_1} \otimes
	\BF{x}^{(2)}_{r_1,r_2} \otimes
	\cdots \otimes
	\BF{x}^{(N-1)}_{r_{N-2},r_{N-1}} \otimes
	\BF{x}^{(N)}_{r_{N-1},1}, 
	\end{equation}
where $\BF{x}^{(n)}_{r_{n-1},r_n} = \ten{X}^{(n)}(r_{n-1},:,r_n) \in\BB{R}^{I_n}$
for all $r_{n-1},r_n,$ and $n$. 
The above form can be compactly represented as
strong Kronecker products of block matrices
	\begin{equation} \label{eqn_TT_vector_skron}
	\BF{x} = 
	\widetilde{\BF{X}}^{(1)} \skron
	\widetilde{\BF{X}}^{(2)} \skron
	\cdots \skron
	\widetilde{\BF{X}}^{(N)}, 
	\end{equation}	
where $\widetilde{\BF{X}}^{(n)} = [\BF{x}^{(n)}_{r_{n-1}, r_n}] \in\BB{R}^{R_{n-1}I_n\times R_n}$ are block matrices 
partitioned with $\BF{x}^{(n)}_{r_{n-1},r_n}\in\BB{R}^{I_n}$
as
	$$
	\widetilde{\BF{X}}^{(n)}
	= \begin{bmatrix}
	\BF{x}^{(n)}_{1,1} & \cdots 
		& \BF{x}^{(n)}_{1,R_n}\\
	\vdots & & \vdots\\
	\BF{x}^{(n)}_{R_{n-1},1} & \cdots 
		& \BF{x}^{(n)}_{R_{n-1},R_n}
	\end{bmatrix}
	\in\BB{R}^{R_{n-1}I_n\times R_n}. 
	$$

TT decomposition can also be extended to a representation for 
linear operators and matrices \cite{Ose2010}. 
Suppose that a large-scale matrix $\BF{A}$
of size $I_1I_2\cdots I_N\times J_1J_2\cdots J_N$
is reshaped and permuted into a higher-order tensor 
$\ten{A}$ of size $I_1\times J_1\times I_2\times J_2
\times\cdots\times I_N\times J_N$.
Similarly to \eqref{eqn:TTcontract}, a TT decomposition for $\ten{A}$
can be represented as contracted products
	\begin{equation} \label{eqn_matrixTT_tensor_modcon}
	\ten{A} = 
	\ten{A}^{(1)} \modcon \ten{A}^{(2)} \modcon \cdots \modcon \ten{A}^{(N)}, 
	\end{equation}
where $\ten{A}^{(n)}\in\BB{R}^{R^A_{n-1}\times I_n\times J_n\times R^A_n}$	
are 4th-order TT-cores and $R^A_1,\ldots,R^A_{N-1}$ are TT-ranks. 
We assume that $R^A_0=R^A_N=1$. The TT decomposition for 
the tensor $\ten{A}$ in \eqref{eqn_matrixTT_tensor_modcon} can be 
equivalently expressed as outer products of slice matrices 
	$$
	\ten{A} = 
	\sum_{r^A_1=1}^{R^A_1}\sum_{r^A_2=1}^{R^A_2} \cdots 
	\sum_{r^A_{N-1}=1}^{R^A_{N-1}} 
	\BF{A}_{1,r^A_1}^{(1)} \circ 
	\BF{A}_{r^A_1,r^A_2}^{(2)}
	\circ \cdots \circ 
	\BF{A}_{r^A_{N-2},r^A_{N-1}}^{(N-1)}
	\circ 
	\BF{A}_{r^A_{N-1},1}^{(N)}, 
	$$
where $\BF{A}^{(n)}_{r^A_{n-1},r^A_n} = \ten{A}^{(n)}(r^A_{n-1},:,:,r^A_n) \in\BB{R}^{I_n\times J_n}$
is a slice of the 4th-order core tensor $\ten{A}^{(n)}\in
\BB{R}^{R_{n-1}\times I_n\times J_n\times R_n}$. 
We can derive that TT decomposition for the matrix 
$\BF{A}\in\BB{R}^{I_1\cdots I_N\times J_1\cdots J_N}$ can be
 represented as a sum of Kronecker products 
	\begin{equation} \label{eqn_TT_matrix_kron}
	\BF{A} = 
	\sum_{r^A_1=1}^{R^A_1}\sum_{r^A_2=1}^{R^A_2} \cdots 
	\sum_{r^A_{N-1}=1}^{R^A_{N-1}} 
	\BF{A}_{1,r^A_1}^{(1)} \otimes 
	\BF{A}_{r^A_1,r^A_2}^{(2)}
	\otimes \cdots \otimes 
	\BF{A}_{r^A_{N-2},r^A_{N-1}}^{(N-1)}
	\otimes 
	\BF{A}_{r^A_{N-1},1}^{(N)}, 
	\end{equation}
where $\BF{A}^{(n)}_{r^A_{n-1},r^A_n} = \ten{A}^{(n)}(r^A_{n-1},:,:,r^A_n) \in\BB{R}^{I_n\times J_n}$. 
The representation in \eqref{eqn_TT_matrix_kron} can be equivalently 
expressed as strong Kronecker product of block matrices 
	\begin{equation} \label{eqn_TT_matrix_skron}
	\BF{A} = \widetilde{\BF{A}}^{(1)}  \skron 
	\widetilde{\BF{A}}^{(2)}  \skron 
	\cdots  \skron 
	\widetilde{\BF{A}}^{(N)}, 
	\end{equation}
where $\widetilde{\BF{A}}^{(n)} = [\BF{A}^{(n)}_{r^A_{n-1},r^A_n}] 
\in\BB{R}^{R^A_{n-1}I_n\times R^A_nJ_n}$
are block matrices partitioned with the slice matrices $\BF{A}^{(n)}_{r^A_{n-1},r^A_n} \in\BB{R}^{I_n\times J_n}$ as 
	$$
	\widetilde{\BF{A}}^{(n)} = 
	\begin{bmatrix}
	\BF{A}^{(n)}_{1,1}&\cdots&\BF{A}^{(n)}_{1,R^A_n}\\
	\vdots&&\vdots\\
	\BF{A}^{(n)}_{R^A_{n-1},1}&\cdots&\BF{A}^{(n)}_{R^A_{n-1},R^A_n}
	\end{bmatrix}
	\in\BB{R}^{R^A_{n-1}I_n\times R^A_nJ_n}. 
	$$
The strong Kronecker product representation \eqref{eqn_TT_matrix_skron} 
for TT decomposition is useful, especially for representing large-scale 
high-dimensional operators such as discrete Laplace operator and 
multilevel (hierarchical) Toeplitz, Hankel, circulant, banded diagonal matrices \cite{Kaz2010,KazKhoTyr2011}.

We call the TT decomposition in \eqref{eqn_TT_vector_kron} and 
\eqref{eqn_TT_vector_skron} for vectors as the vector TT decomposition, 
and the TT decomposition in \eqref{eqn_TT_matrix_kron} and 
\eqref{eqn_TT_matrix_skron} for matrices as the matrix TT decomposition. 
Note that the blocks $\BF{x}^{(n)}_{r_{n-1},r_n}\in\BB{R}^{I_n}$ and 
$\BF{A}^{(n)}_{r^A_{n-1},r^A_n}\in\BB{R}^{I_n\times J_n}$ in the 
expressions can be further generalized to higher-order tensors, which  
leads to TT decomposition for multilinear operators and higher-order tensors.

\section{Basic operations using TT decomposition}
\label{app:basic}


Based on the introduced TT decompositions, basic operations on 
large-scale vectors and matrices such as matrix-by-vector multiplication 
can be performed fast and conveniently. 
For basic algebraic operations on tensors represented by 
TT decomposition, it is important to perform all the operations based on 
the factor (core) tensors in TT decomposition and avoid the explicit 
calculation of the full tensors. In this section we present basic operations 
on vectors, matrices, and higher-order tensors represented 
by TT decomposition, in simple and very efficient  forms by using the notations 
introduced in the previous sections. 
The basic operations include addition, scalar multiplication, direct sum, 
Hadamard product, Kronecker product, 
contraction, matrix-by-vector product, matrix-by-matrix product, and quadratic form. 
Figure \ref{Fig:TTMatrix} illustrates tensor network diagrams
for the matrix TT decomposition, the matrix-by-vector multiplication, 
and the quadratic form, represented by TT decomposition.  

Note that such operations usually increase the TT-ranks, 
which requires truncation (rounding) in the following step \cite{Ose2011}. In addition,
the matrix-by-vector product and quadratic form are very important 
for computational algorithms in optimization problems such as 
linear equations \cite{DolSav2014,Holtz2012,OseDol2012}
and eigenvalue problems \cite{Dol2013b,KhoOse2010a,KresSteinUsh2014}. 
Tables~\ref{Table:basic_operations_TT} and \ref{Table:basic_operations_vecmatTT} 
summarize
the representations for the basic operations on the tensors represented by TT decomposition. 

\begin{figure}
\centering
\begin{tabular}{ccc}
\includegraphics[width=4.3cm]{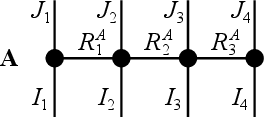}& 
\includegraphics[width=4.3cm]{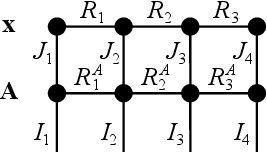}& 
\includegraphics[width=4.3cm]{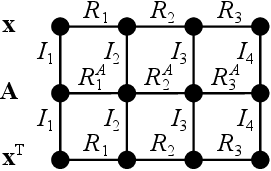}\\
(a) $\BF{A}$ & (b) $\BF{Ax}$ & (c) $\BF{x}^\Trps\BF{Ax}$
\end{tabular}
\caption{\label{Fig:TTMatrix}
(a) A matrix $\BF{A}\in\BB{R}^{I_1I_2I_3I_4\times J_1J_2J_3J_4}$ 
in matrix TT decomposition, 
(b) matrix-by-vector product $\BF{y}=\BF{Ax}$ in TT decomposition, 
and (c) quadratic form $\BF{x}^\Trps\BF{Ax}$ when 
$I_n=J_n,n=1,\ldots,4.$ }
\end{figure}

\begin{table} 
\centering
\caption{\label{Table:basic_operations_TT}
TT representations for basic operations on tensors represented by 
TT decomposition.}
\vspace{.5pc}
\begin{tabular}{lll}
\hline\\[-0.8pc]
Operation &  & TT-cores\\
\hline\\[-0.8pc]
\multicolumn{3}{l}{TT (global)*}\\
\multicolumn{3}{l}{
$\ten{Z}=\ten{X}+\ten{Y}
		 = \left(\BF{X}^{(1)} \boxplus \BF{Y}^{(1)}\right) \modcon
		\left(\ten{X}^{(2)} \boxplus \ten{Y}^{(2)}\right) \modcon\cdots\modcon
		\left(\BF{X}^{(n)} \boxplus \BF{Y}^{(n)}\right) $} \\
	& Ten & $\ten{Z}^{(n)} = \ten{X}^{(n)} \boxplus \ten{Y}^{(n)}$\\
	& Mat & $\BF{Z}^{(n)}_{i_n} = \BF{X}^{(n)}_{i_n} \oplus 
					   \BF{Y}^{(n)}_{i_n}$\\
	& Vec & $\BF{z}^{(n)}_{s_{n-1},s_n}=\BF{x}^{(n)}_{s_{n-1},s_n}$; 
			$\BF{y}^{(n)}_{s_{n-1}-R_{n-1}^X,s_n-R_n^X}$;
			$\BF{0}_{I_n}$\\
		
\multicolumn{3}{l}{
$\ten{Z}=\ten{X} \oplus \ten{Y}
		 = \left(\BF{X}^{(1)} \oplus \BF{Y}^{(1)}\right) \modcon
		\left(\ten{X}^{(2)} \oplus \ten{Y}^{(2)}\right) \modcon\cdots\modcon
		\left(\BF{X}^{(n)} \oplus \BF{Y}^{(n)}\right) $} \\
	& Ten & $\ten{Z}^{(n)} = \ten{X}^{(n)} \oplus \ten{Y}^{(n)}$\\
	& Mat & 
		$\displaystyle 
		\BF{Z}^{(n)}_{k_n}=\begin{cases}
		\BF{X}^{(n)}_{k_n} \oplus \BF{0}_{R^Y_{n-1}\times R^Y_n} & 
		\text{if } 1\leq k_n\leq I_n\\ 
		\BF{0}_{R^X_{n-1}\times R^X_n} \oplus \BF{Y}^{(n)}_{k_n-I_n} & 
		\text{if } I_n < k_n \leq I_n+J_n
		\end{cases}$\\
	& Vec & $\BF{z}^{(n)}_{s_{n-1},s_n}=
		\BF{x}^{(n)}_{s_{n-1},s_n} \oplus \BF{0}_{J_n}; 
		\BF{0}_{I_n} \oplus \BF{y}^{(n)}_{s_{n-1}-R_{n-1}^X,s_n-R_n^X}; 
		\BF{0}_{I_n+J_n}$\\
		
\multicolumn{3}{l}{
$\ten{Z}=\ten{X}\circledast\ten{Y}
		 = \left(\ten{X}^{(1)} \boxtimes \ten{Y}^{(1)}\right) \modcon
		\left(\ten{X}^{(2)} \boxtimes \ten{Y}^{(2)}\right) \modcon\cdots\modcon
		\left(\ten{X}^{(n)} \boxtimes \ten{Y}^{(n)}\right) $} \\
	& Ten & $\ten{Z}^{(n)}=\ten{X}^{(n)} \boxtimes \ten{Y}^{(n)}$\\
	& Mat & $\BF{Z}^{(n)}_{i_n} = \BF{X}^{(n)}_{i_n} \otimes \BF{Y}^{(n)}_{i_n}$\\
	& Vec & $\BF{z}^{(n)}_{s_{n-1},s_n}=
		\BF{x}^{(n)}_{r^X_{n-1},r^X_n}\circledast
		\BF{y}^{(n)}_{r^Y_{n-1},r^Y_n}$\quad ($s_n = \overline{r^X_nr^Y_n}$)\\
		
\multicolumn{3}{l}{
$\ten{Z}=\ten{X} \otimes \ten{Y}
		 = \left(\ten{X}^{(1)} \otimes \ten{Y}^{(1)}\right) \modcon
		\left(\ten{X}^{(2)} \otimes \ten{Y}^{(2)}\right) \modcon\cdots\modcon
		\left(\ten{X}^{(n)} \otimes \ten{Y}^{(n)}\right) $} \\
	& Ten & $\ten{Z}^{(n)}=\ten{X}^{(n)} \otimes \ten{Y}^{(n)}$\\
	& Mat & $\BF{Z}^{(n)}_{k_n} = \BF{X}^{(n)}_{i_n} \otimes \BF{Y}^{(n)}_{j_n}$\quad ($k_n=\overline{i_nj_n}$)\\
	& Vec & $\BF{z}^{(n)}_{s_{n-1},s_n}=
		\BF{x}^{(n)}_{r^X_{n-1},r^X_n}\otimes
		\BF{y}^{(n)}_{r^Y_{n-1},r^Y_n}$\quad ($s_n = \overline{r^X_nr^Y_n}$)\\

\multicolumn{3}{l}{
$\ten{Z}=\ten{X} \circ \ten{Y}
		 = \ten{X}^{(1)} \modcon \cdots \modcon \ten{X}^{(N)} 
		\modcon \ten{Y}^{(1)}  \modcon \cdots \modcon \ten{Y}^{(N)}$} \\
	& Ten & $\ten{Z}^{(n)}=\ten{X}^{(n)}$  $(n\leq N)$; $\ten{Y}^{(n-N)}$ $(n>N)$\\
	& Mat & $\BF{Z}^{(n)}_{i_n} = \BF{X}^{(n)}_{i_n}$ $(n\leq N)$;  $\BF{Y}^{(n-N)}_{i_n}$ $(n>N)$\\
	& Vec & $\BF{z}^{(n)}_{s_{n-1},s_n}=
		\BF{x}^{(n)}_{s_{n-1},s_n}$  $(n\leq N)$; 
		$\BF{y}^{(n-N)}_{s_{n-1},s_n}$ $(n>N)$\\

\multicolumn{3}{l}{
$\ten{Z} = \ten{X} \times_n \BF{A}
		 = \ten{X}^{(1)} \modcon \cdots \modcon \ten{X}^{(n-1)}
		\modcon \left(\ten{X}^{(n)}\times_2\BF{A} \right) \modcon 
		\ten{X}^{(n+1)}  \modcon \cdots \modcon \ten{X}^{(N)}$} \\
	& Ten & $\ten{Z}^{(m)}=\ten{X}^{(m)}$ $(m\neq n)$; 
		      $\ten{X}^{(m)} \times_2 \BF{A}$ $(m=n)$\\
	& Mat &  $\BF{Z}^{(m)}_{i_m} = \BF{X}^{(m)}_{i_m}$ $(m\neq n)$;  
			$\ten{X}^{(m)} \overline{\times}_2 \BF{a}_{i_m,:}$ $(m=n)$
			\\
	& Vec & $\BF{z}^{(m)}_{s_{m-1},s_m}=
		\BF{x}^{(m)}_{s_{m-1},s_m}$  $(m\neq n)$; 
		$\BF{A}\BF{x}^{(m)}_{s_{m-1},s_m}$ $(m=n)$\\

\multicolumn{3}{l}{
$\ten{Z}=\ten{X} \modcon \ten{Y} 
		 = \ten{X}^{(1)} \modcon \cdots \modcon \ten{X}^{(N-1)} 
		\modcon \left( \BF{X}^{(N)}\BF{Y}^{(1)} \modcon 
		\ten{Y}^{(2)} \right) \modcon \ten{Y}^{(3)} \modcon \cdots \modcon \ten{Y}^{(N)}$} \\
	& Ten & $\ten{Z}^{(n)}=\ten{X}^{(n)}$  $(n< N)$; 
			$\BF{X}^{(N)}\BF{Y}^{(1)}\modcon\ten{Y}^{(2)}$; 
			$\ten{Y}^{(n-N+2)}$ $(N<n)$\\
	& Mat & $\BF{Z}^{(n)}_{i_n} = \BF{X}^{(n)}_{i_n}$ $(n< N)$;  
			$\BF{X}^{(N)}\BF{Y}^{(1)}\BF{Y}^{(2)}_{i_n}$; 
			$\BF{Y}^{(n-N+2)}_{i_n}$ $(N<n)$\\
	& Vec & $\BF{z}^{(n)}_{s_{n-1},s_n}=
		\BF{x}^{(n)}_{s_{n-1},s_n}$  $(n< N)$; 
		$\BF{x}^{(N)\Trps}_{s_{n-1}} \BF{Y}^{(1)} \BF{Y}^{(2)}_{:,:,s_n}$; 
		$\BF{y}^{(n-N+2)}_{s_{n-1},s_n}$ $(N<n)$\\

$z=\left\langle \ten{X}, \ten{Y}\right\rangle$
	& \multicolumn{2}{l}{} \\
	& Ten & 
		$\BF{Z}^{(n)} = \left( \ten{X}^{(n)} \boxtimes \ten{Y}^{(n)} \right) \overline{\times}_2 \BF{1}_{I_n} $\\
	& Mat & $\BF{Z}^{(n)}= \sum_{i_n}\BF{X}^{(n)}_{i_n}\otimes \BF{Y}^{(n)}_{i_n}$\\
	& Vec & $z^{(n)}_{s_{n-1},s_n}=
		\left\langle \BF{x}^{(n)}_{r^X_{n-1},r^X_n}, 
		\BF{y}^{(n)}_{r^Y_{n-1},r^Y_n}\right\rangle$\quad ($s_n = \overline{r^X_nr^Y_n}$)
		\\
\hline
\multicolumn{3}{l}{* Ten = Tensors (cores), Mat = Matrices (slices), Vec = Vectors (fibers)}
\end{tabular}
\end{table}

\begin{table} 
\centering
\caption{\label{Table:basic_operations_vecmatTT}
TT representations for basic operations on tensors represented by 
vector TT and matrix TT decompositions.}
\vspace{.5pc}
\begin{tabular}{lll}
\hline\\[-0.8pc]
Operation &  & TT-cores\\
\hline\\[-0.8pc]
\multicolumn{3}{l}{Vector TT \& Matrix TT*}\\

\multicolumn{3}{l}{
$\BF{Z} = \BF{A} + \BF{B} 
	= \begin{bmatrix} \widetilde{\BF{A}}^{(1)}&\widetilde{\BF{B}}^{(1)}\end{bmatrix}
	\skron \begin{bmatrix} \widetilde{\BF{A}}^{(2)}&\BF{0}\\ \BF{0}&\widetilde{\BF{B}}^{(2)}\end{bmatrix}
	\skron\cdots \skron \begin{bmatrix} \widetilde{\BF{A}}^{(N-1)}&\BF{0}\\ \BF{0}&\widetilde{\BF{B}}^{(N-1)}\end{bmatrix}
	\skron \begin{bmatrix} \widetilde{\BF{A}}^{(N)}\\ \widetilde{\BF{B}}^{(N)}\end{bmatrix}
	$}\\
	& Blk & $\widetilde{\BF{Z}}^{(n)} = \widetilde{\BF{A}}^{(n)} \boxplus 
	\widetilde{\BF{B}}^{(n)}$\\
	& Mat & $\BF{Z}^{(n)}_{i_n,j_n} = \BF{A}^{(n)}_{i_n,j_n} \oplus \BF{B}^{(n)}_{i_n,j_n}$\\
	& Vec & $\BF{Z}^{(n)}_{s_{n-1},s_n}=
		\BF{A}^{(n)}_{s_{n-1},s_n}$; 
		$\BF{B}^{(n)}_{s_{n-1}-R^A_{n-1},s_n-R^A_n}$; $\BF{0}$\\



\multicolumn{3}{l}{
$\BF{Z} = \BF{A} \otimes \BF{B}
	= \widetilde{\BF{A}}^{(1)} \skron \cdots \skron \widetilde{\BF{A}}^{(N)} 
		\skron \widetilde{\BF{B}}^{(1)}  \skron \cdots \skron \widetilde{\BF{B}}^{(N)}$} \\
	& Blk & $\widetilde{\BF{Z}}^{(n)} = \widetilde{\BF{A}}^{(n)}$  $(n\leq N)$; $\widetilde{\BF{B}}^{(n-N)}$ $(n>N)$\\
	& Mat & $\BF{Z}^{(n)}_{i_n,j_n} = \BF{A}^{(n)}_{i_n,j_n}$ $(n\leq N)$;  $\BF{B}^{(n-N)}_{i_n,j_n}$ $(n>N)$\\
	& Vec & $\BF{Z}^{(n)}_{s_{n-1},s_n}=
		\BF{A}^{(n)}_{s_{n-1},s_n}$  $(n\leq N)$; 
		$\BF{B}^{(n-N)}_{s_{n-1},s_n}$ $(n>N)$\\


\multicolumn{3}{l}{	
$\ten{Z} = \ten{X} \times_n \BF{A}
	= \left( \widetilde{\ten{X}}^{(1)} \times_n
		\widetilde{\BF{A}}^{(1)} \right) \skron\cdots\skron
	\left( \widetilde{\ten{X}}^{(N)} \times_n
		\widetilde{\BF{A}}^{(N)} \right)	
	$}\\
	& Blk & 
		$\widetilde{\ten{Z}}^{(n)}=\widetilde{\ten{X}}^{(n)} \times_n
		\widetilde{\BF{A}}^{(n)}$\\
	& Mat & $\BF{Z}^{(n)}_{ijk} = \sum_{t} \BF{X}^{(n)}_{ijt} \otimes \BF{A}^{(n)}_{kt}$\\
	& Vec & $\ten{Z}^{(n)}_{s_{n-1},s_n}=
		\ten{X}^{(n)}_{r^X_{n-1},r^X_n} \times_n 
		\BF{A}^{(n)}_{r^A_{n-1},r^A_n}$\quad ($s_n = \overline{r^X_nr^A_n}$)
		\\

\multicolumn{3}{l}{	
$z=\BF{x}^\Trps\BF{y}=\left\langle \BF{x}, \BF{y}\right\rangle
	= \left( \widetilde{\BF{X}}^{(1)} \modcon
		\widetilde{\BF{Y}}^{(1)} \right) \skron\cdots\skron
	\left( \widetilde{\BF{X}}^{(N)} \modcon
		\widetilde{\BF{Y}}^{(N)} \right)	
	$}\\
	& Blk & 
		$\widetilde{\BF{Z}}^{(n)}=\widetilde{\BF{X}}^{(n)} \modcon
		\widetilde{\BF{Y}}^{(n)}$\\
	& Mat & $\BF{Z}^{(n)}= \sum_{i_n}\BF{X}^{(n)}_{i_n}\otimes \BF{Y}^{(n)}_{i_n}$\\
	& Vec & $z^{(n)}_{s_{n-1},s_n}=
		\left\langle \BF{x}^{(n)}_{r^X_{n-1},r^X_n}, 
		\BF{y}^{(n)}_{r^Y_{n-1},r^Y_n}\right\rangle$\quad ($s_n = \overline{r^X_nr^Y_n}$)
		\\

\multicolumn{3}{l}{			
$\BF{z}= \BF{Ax}
	= \left( \widetilde{\BF{A}}^{(1)} \modcon
		\widetilde{\BF{X}}^{(1)} \right) \skron\cdots\skron
	\left( \widetilde{\BF{A}}^{(N)} \modcon
		\widetilde{\BF{X}}^{(N)} \right)	
	$}\\
	& Blk & $\widetilde{\BF{Z}}^{(n)}=\widetilde{\BF{A}}^{(n)} \modcon \widetilde{\BF{X}}^{(n)}$\\
	& Mat & $\BF{Z}^{(n)}_{i_n} = \sum_{j_n} \BF{A}^{(n)}_{i_n,j_n}\otimes 
		\BF{X}^{(n)}_{j_n}$\\
	& Vec & 
		$\BF{z}^{(n)}_{s_{n-1},s_n}= \BF{A}^{(n)}_{r^A_{n-1},r^A_n} \BF{x}^{(n)}_{r_{n-1},r_n}$\quad ($s_n = \overline{r^A_nr_n}$)\\

\multicolumn{3}{l}{			
$\BF{Z}= \BF{AB}
	= \left( \widetilde{\BF{A}}^{(1)} \modcon
		\widetilde{\BF{B}}^{(1)} \right) \skron\cdots\skron
	\left( \widetilde{\BF{A}}^{(N)} \modcon
		\widetilde{\BF{B}}^{(N)} \right)	
	$}\\
	& Blk & $\widetilde{\BF{Z}}^{(n)}=\widetilde{\BF{A}}^{(n)} \modcon \widetilde{\BF{B}}^{(n)}$\\
	& Mat & $\BF{Z}^{(n)}_{i_n,j_n} = \sum_{k_n} \BF{A}^{(n)}_{i_n,k_n}\otimes 
		\BF{B}^{(n)}_{k_n,j_n}$\\
	& Vec & 
		$\BF{Z}^{(n)}_{s_{n-1},s_n}= \BF{A}^{(n)}_{r^A_{n-1},r^A_n} \BF{B}^{(n)}_{r^B_{n-1},r^B_n}$\quad ($s_n = \overline{r^A_nr^B_n}$)\\

\multicolumn{3}{l}{			
$z = \BF{x}^\Trps\BF{Ax} = \langle \BF{x},\BF{Ax}\rangle
	= \left( \widetilde{\BF{X}}^{(1)} \modcon \widetilde{\BF{A}}^{(1)} \modcon
		\widetilde{\BF{X}}^{(1)} \right) \skron\cdots\skron
	\left( \widetilde{\BF{X}}^{(N)} \modcon \widetilde{\BF{A}}^{(N)} \modcon
		\widetilde{\BF{X}}^{(N)} \right)	
	$}\\
	& Blk & $\widetilde{\BF{Z}}^{(n)}=\widetilde{\BF{X}}^{(n)} \modcon \widetilde{\BF{A}}^{(n)} \modcon \widetilde{\BF{X}}^{(n)}$\\
	& Mat & $\BF{Z}^{(n)} = \sum_{i_n}\sum_{j_n}
		\BF{X}^{(n)}_{i_n}\otimes \BF{A}^{(n)}_{i_n,j_n}\otimes 
		\BF{X}^{(n)}_{j_n}$\\
	& Vec & $z^{(n)}_{s_{n-1},s_n}=
		\left\langle \BF{x}^{(n)}_{r_{n-1}',r_n'}, 
		\BF{A}^{(n)}_{r^A_{n-1},r^A_n}\BF{x}^{(n)}_{r_{n-1},r_n}
		\right\rangle$\quad ($s_n = \overline{r_n'r^A_nr_n}$)\\
\hline
\multicolumn{3}{l}{* Blk = Block matrices, Mat = Matrices (slices), Vec = Vectors (fibers)}
\end{tabular}
\end{table}

\subsection{Addition and scalar multiplication}

Let 	
$\ten{X}=\ten{X}^{(1)}  
\modcon\cdots\modcon \ten{X}^{(N)} \in\BB{R}^{I_1\times\cdots\times I_N}$ and 
$\ten{Y}=\ten{Y}^{(1)} 
\modcon\cdots\modcon \ten{Y}^{(N)} \in\BB{R}^{I_1\times\cdots\times I_N}$ be TT tensors
with TT-ranks $\{R^X_n\}$ and $\{R^Y_n\}$. 
Under the assumption that the 1st and the $N$th TT-cores are matrices (i.e., 
2nd-order tensors,) 
the sum $\ten{Z}=\ten{X}+\ten{Y} \in\BB{R}^{I_1\times\cdots\times I_N}$ can be expressed 
by the TT decomposition (see, Proposition~\ref{prop_TT_algebra})
	\begin{equation}
	\ten{Z}=  
	\left( \BF{X}^{(1)}\boxplus \BF{Y}^{(1)} \right) \modcon
	\left( \ten{X}^{(2)}\boxplus \ten{Y}^{(2)} \right) \modcon
	\cdots\modcon
	\left( \BF{X}^{(N)}\boxplus \BF{Y}^{(N)} \right). 
	\end{equation}
Note that each TT-core of the sum $\ten{Z}=\ten{X}+\ten{Y}$
is written as the partial direct sum of the
 corresponding TT-cores. 
 
 Alternatively, each entry of the sum 
can be represented as products of slice matrices of TT-cores
	\begin{equation*}
	\begin{split}
	z_{i_1,i_2,\ldots,i_N}
	& =
	\left( \BF{X}^{(1)}_{i_1}\oplus \BF{Y}^{(1)}_{i_1} \right)
	\left( \BF{X}^{(2)}_{i_2} \oplus \BF{Y}^{(2)}_{i_2} \right)
	\cdots
	\left( \BF{X}^{(N)}_{i_N}\oplus \BF{Y}^{(N)}_{i_N} \right)
	\\
	& = 
	\begin{bmatrix} \BF{X}^{(1)}_{i_1} & \BF{Y}^{(1)}_{i_1} \end{bmatrix}
	\begin{bmatrix} \BF{X}^{(2)}_{i_2} & \\ & \BF{Y}^{(2)}_{i_2} \end{bmatrix}
	\cdots 
	\begin{bmatrix} \BF{X}^{(N)}_{i_N}\\ \BF{Y}^{(N)}_{i_N} \end{bmatrix}, 
	\end{split}
	\end{equation*}
where the first and the last factor matrices are assumed row and column vectors,
 respectively. The TT-ranks of the above TT decomposition for $\ten{X}+\ten{Y}$ 
are the sums, $\{R^X_n + R^Y_n\}$. 

On the other hand, multiplication of a TT tensor $\ten{X}$ with a scalar $c\in\BB{R}$
can be obtained by simply multiplying one core, e.g., $\ten{X}^{(1)}$, with $c$
as $c\ten{X}^{(1)}$. This does not increase the TT-ranks. 

We note that that the set of tensors with TT-ranks bounded by $\{R_n\}$ 
is not convex, since a linear combination 
$c\ten{X}+(1-c)\ten{Y}$ generally increases the TT-ranks, which 
may exceed $\{R_n\}$. 

\subsection{Direct sum}

Let 	
$\ten{X}=\ten{X}^{(1)} 
\modcon\cdots\modcon \ten{X}^{(N)} \in\BB{R}^{I_1\times\cdots\times I_N}$ and 
$\ten{Y}=\ten{Y}^{(1)} 
\modcon\cdots\modcon \ten{Y}^{(N)} \in\BB{R}^{J_1\times\cdots\times J_N}$ be TT tensors
with TT-ranks $\{R^X_n\}$ and $\{R^Y_n\}$. 
Under the assumption that the 1st and the $N$th TT-cores are matrices (i.e., 2nd-order tensors,) the direct sum $\ten{Z}=\ten{X} \oplus \ten{Y}$ can be expressed 
by the TT decomposition (see, Proposition~\ref{prop_TT_algebra})
	\begin{equation}
	\ten{Z}=  
	\left( \BF{X}^{(1)}\oplus \BF{Y}^{(1)} \right) \modcon
	\left( \ten{X}^{(2)}\oplus \ten{Y}^{(2)} \right) \modcon
	\cdots\modcon
	\left( \BF{X}^{(N)}\oplus \BF{Y}^{(N)} \right). 
	\end{equation}	
The TT-ranks of the above TT decomposition for $\ten{X}\oplus\ten{Y}$ are the sums, $\{R^X_n+R^Y_n\}$.

\subsection{Hadamard product}

The Hadamard (elementwise) product $\ten{Z}=\ten{X}\circledast\ten{Y}$ of 
two TT tensors $\ten{X}=\ten{X}^{(1)}\modcon \cdots \modcon\ten{X}^{(N)}
\in\BB{R}^{I_1\times\cdots\times I_N}$ 
and $\ten{Y}=\ten{Y}^{(1)} \modcon\cdots\modcon \ten{Y}^{(N)}
\in\BB{R}^{I_1\times\cdots\times I_N}$
can be expressed by TT decomposition as (see, Proposition~\ref{prop_TT_algebra})
	\begin{equation}
	\ten{Z}
	= \left( \ten{X}^{(1)} \boxtimes \ten{Y}^{(1)} \right)
	\modcon
	\left( \ten{X}^{(2)} \boxtimes \ten{Y}^{(2)} \right)
	\modcon\cdots\modcon
	\left( \ten{X}^{(N)} \boxtimes \ten{Y}^{(N)} \right). 
	\end{equation}
As an alternative representation, each entry can be written as products of slice matrices of TT-cores 
	$$
	z_{i_1,i_2,\ldots,i_N}
	= 
	\left( \BF{X}^{(1)}_{i_1} \otimes \BF{Y}^{(1)}_{i_1} \right)
	\left( \BF{X}^{(2)}_{i_2} \otimes \BF{Y}^{(2)}_{i_2} \right)
	\cdots
	\left( \BF{X}^{(N)}_{i_N} \otimes \BF{Y}^{(N)}_{i_N} \right). 
	$$
The TT-ranks for the above Hadamard product representation 
are the multiplications of individual ranks, $\{R^X_nR^Y_n\}$.

\subsection{Kronecker product}

The Kronecker product $\ten{Z} = \ten{X}\otimes \ten{Y}$
of two TT tensors $\ten{X}=\ten{X}^{(1)} 
\modcon\cdots\modcon \ten{X}^{(N)} \in\BB{R}^{I_1\times\cdots\times I_N}$ and 
$\ten{Y}=\ten{Y}^{(1)} 
\modcon\cdots\modcon \ten{Y}^{(N)} \in\BB{R}^{J_1\times\cdots\times J_N}$ 
with TT-ranks $\{R^X_n\}$ and $\{R^Y_n\}$ 
can be expressed by TT decomposition as  
(see, Proposition~\ref{prop_TT_algebra})
	\begin{equation}
	\ten{Z}=  
	\left( \ten{X}^{(1)}\otimes \ten{Y}^{(1)} \right) \modcon
	\left( \ten{X}^{(2)}\otimes \ten{Y}^{(2)} \right) \modcon
	\cdots\modcon
	\left( \ten{X}^{(N)}\otimes \ten{Y}^{(N)} \right). 
	\end{equation}	
The TT-ranks of the above TT decomposition for $\ten{X}\otimes\ten{Y}$ are the multiplications of individual ranks, $\{R^X_nR^Y_n\}$. 

\subsection{Full contraction: Inner product}

The contraction of two tensors $\ten{A}\in\BB{R}^{I_1\times I_2\times\cdots
\times I_N}$ and $\ten{B}\in\BB{R}^{I_1\times I_2\times\cdots
\times I_N}$ is defined by 
	\begin{equation*}
	\begin{split}
	\left\langle \ten{A}, \ten{B} 	\right\rangle
	& = \sum_{i_1=1}^{I_1}\sum_{i_2=1}^{I_2}\cdots\sum_{i_N=1}^{I_N}
	\ten{A}(i_1,i_2,\ldots,i_N)
	\ten{B}(i_1,i_2,\ldots,i_N). 
	\end{split}
	\end{equation*}

\begin{exmp}
The contraction of a TT tensor $\ten{X}=\ten{X}^{(1)}\modcon\cdots
\modcon\ten{X}^{(N)}$ with 
a rank-one tensor $\BF{u}^{(1)}\circ\cdots\circ\BF{u}^{(N)}$ 
can be simplified as
	\begin{equation}
	\begin{split}
	\left\langle \ten{X}, \BF{u}^{(1)}\circ\cdots\circ\BF{u}^{(N)} \right\rangle
	& = \left( \ten{X}^{(1)}\modcon\cdots
		\modcon\ten{X}^{(N)} \right)
		\overline{\times}_1 \BF{u}^{(1)}
		\cdots
		\overline{\times}_N \BF{u}^{(N)}\\
	& = \left(\ten{X}^{(1)}\overline{\times}_2\BF{u}^{(1)}\right)
		\left(\ten{X}^{(2)}\overline{\times}_2\BF{u}^{(2)}\right)
		\cdots
		\left(\ten{X}^{(N)}\overline{\times}_2\BF{u}^{(N)}\right). 
	\end{split}
	\end{equation}
\end{exmp}


The full contraction of two TT tensors 
$\ten{X}=\ten{X}^{(1)}\modcon\cdots \modcon\ten{X}^{(N)}$ and
$\ten{Y}=\ten{Y}^{(1)}\modcon\cdots \modcon\ten{Y}^{(N)}$
can be calculated by combining the Hadamard product and 
the contraction with the rank-one tensor 
$\BF{1}_{I_1}\circ\cdots\circ\BF{1}_{I_N}$ as 
	$$
	\left\langle \ten{X}, \ten{Y} \right\rangle = 
	\left\langle \ten{X} \circledast \ten{Y},  \BF{1}_{I_1} \circ\cdots\circ\BF{1}_{I_N} \right\rangle = \BF{Z}_1\cdots \BF{Z}_N,
	$$
where 
	\begin{equation}
	\begin{split}
	\BF{Z}_n 
	& = (\ten{X}^{(n)} \boxtimes \ten{Y}^{(n)}) \overline{\times}_2 \BF{1}_{I_n} \\
	& = \sum_{i_n=1}^{I_n} \BF{X}^{(n)}_{i_n}\otimes \BF{Y}^{(n)}_{i_n}
	\in\BB{R}^{R^X_{n-1}R^Y_{n-1} \times R^X_nR^Y_n}, 
	\quad n=1,\ldots,N. 
	\end{split}
	\end{equation}

The computational cost for calculating the contraction $\langle \ten{X},\ten{Y}\rangle$ of two TT tensors can be reduced to $\CL{O}(NIR^3)$ where $I=\max(I_n)$ and $R=\max(\{R^X_n\},\{R^Y_n\})$, which is linear with $N$ \cite{Ose2011}.

\subsection{Matrix-by-vector product}
\label{sec:mat_by_vec}

The matrix-by-vector product can also 
be efficiently represented by vector TT and matrix TT decompositions. 
Let $\BF{x} \in\BB{R}^{J_1J_2\cdots J_N}$ and $\BF{A} \in\BB{R}^{I_1I_2\cdots I_N\times J_1J_2\cdots J_N}$ be a vector and a matrix
represented by vector TT \eqref{eqn_TT_vector_skron} and 
matrix TT \eqref{eqn_TT_matrix_skron} decompositions, i.e., 
	\begin{equation*}
	\begin{split}
	\BF{x} &= \widetilde{\BF{X}}^{(1)} \skron \cdots \skron \widetilde{\BF{X}}^{(N)}, \\
	\BF{A} &= \widetilde{\BF{A}}^{(1)}  \skron \cdots  \skron \widetilde{\BF{A}}^{(N)},
	\end{split}
	\end{equation*}
with TT-ranks $\{R^X_n\}$ and $\{R^A_n\}$, respectively. 
The matrix-by-vector product can be represented by vector TT decomposition as 
	\begin{equation} \label{eqn_mat_vec_skron}
	\BF{Ax} = \BF{A} \modcon \BF{x} = 
	\left( \widetilde{\BF{A}}^{(1)} \modcon \widetilde{\BF{X}}^{(1)} \right) \skron \cdots  \skron \left( \widetilde{\BF{A}}^{(N)} \modcon \widetilde{\BF{X}}^{(N)} \right). 
	\end{equation}
That is, the $n$th block matrix of the vector TT decomposition
\eqref{eqn_mat_vec_skron} of the product $\BF{Ax}$ is expressed by 
contracted product of the corresponding block matrices. It can be re-written as
	$$
	\widetilde{\BF{A}}^{(n)} \modcon \widetilde{\BF{X}}^{(n)}
	= \left[ \BF{z}^{(n)}_{s_{n-1},s_n} \right] \in \BB{R}^{R^A_{n-1}R_{n-1}I_n \times R^A_nR_n}, 
	$$
where each block is a vector
	\begin{equation} \label{eqn_mat_vec_fiber}
	\BF{z}^{(n)}_{s_{n-1},s_n} = \BF{A}^{(n)}_{r^A_{n-1},r^A_n} \BF{x}^{(n)}_{r_{n-1},r_n}
	\in\BB{R}^{I_n}, 
	\quad 
	s_{n-1}=\overline{r^A_{n-1}r_{n-1}},\, s_n=\overline{r^A_nr_n},	
	\end{equation}
for all $s_{n-1}=1,\ldots,R^A_{n-1}R_{n-1}$, 
$s_{n}=1,\ldots,R^A_{n}R_{n}$, $n=1,\ldots,N$. 

In TT decomposition, the vector \eqref{eqn_mat_vec_fiber} 
is considered as a fiber of a 3rd-order TT-core $\ten{Z}^{(n)}\in\BB{R}^{R^A_{n-1}R_{n-1} \times I_n \times R^A_{n}R_{n}}$, i.e., $\BF{z}^{(n)}_{s_{n-1},s_n} = \ten{Z}^{(n)}(s_{n-1}, : , s_n)$, e.g., see Section~\ref{sec_TT_vec_mat}. Slice matrices of the TT-core $\ten{Z}^{(n)}$ can be written as 
	\begin{equation} \label{eqn:core_contraction_entry}
	\BF{Z}^{(n)}_{i_n} = \ten{Z}^{(n)}(:, i_n , :)
	= \sum_{j_n=1}^{J_n} \BF{A}^{(n)}_{i_n,j_n}\otimes 
	\BF{X}^{(n)}_{j_n}
	\in\BB{R}^{R^A_{n-1}R_{n-1} \times R^A_nR_n}, 
	\end{equation}
where $\BF{A}^{(n)}_{i_n,j_n} = \ten{A}^{(n)}(:,i_n,j_n,:)$ and $\BF{X}^{(n)}_{j_n} = \ten{X}^{(n)}(:,j_n:)$ are the slice matrices of the $n$th TT-cores of $\BF{A}$ and $\BF{x}$, respectively. As a result, the entries of the product $\BF{z}=\BF{Ax}$ can be  
written as  products of slice matrices of TT-cores
	\begin{equation*} 
	\BF{z}(\overline{i_1,\ldots,i_N})
	= \BF{Z}^{(1)}_{i_1}\BF{Z}^{(2)}_{i_2}\cdots\BF{Z}^{(N)}_{i_N}. 
	\end{equation*}
Note that $\BF{Z}^{(1)}_{i_1}\in\BB{R}^{1\times R^A_1R_1}$
and $\BF{Z}^{(N)}_{i_N}\in\BB{R}^{R^A_{N-1}R_{N-1} \times 1}$
are row and column vectors. 

The computational cost for computing a matrix-by-vector product 
of a matrix $\BF{A}\in\BB{R}^{I_1\cdots I_N\times J_1\cdots J_N}$ with 
a vector $\BF{x}\in\BB{R}^{J_1\cdots J_N}$ 
can be $\CL{O}(NI^2R^4)$ where $I=\max(\{I_n\},\{J_n\})$, 
$R=\max(\{R_n\},\{R^A_n\})$, by using TT decomposition \cite{Ose2011}. 

%
%

On the other hand, recall that, in \eqref{eqn:frame_01} and \eqref{eqn_vectorize_3}, 
	$$
	\BF{x} = \text{vec}\left( \ten{X} \right) = \BF{X}^{\neq n}\BF{x}^{(n)}, 
	$$
where $\BF{X}^{\neq n}$ is the frame matrix and 
$\BF{x}^{(n)}=\text{vec}(\ten{X}^{(n)})$. 
A large-scale matrix-by-vector multiplication can be reduced to a smaller matrix-by-vector multiplication as
$\BF{Ax}=\BF{AX}^{\neq n}\BF{x}^{(n)}\equiv\widetilde{\BF{A}}_{n}\BF{x}^{(n)}$, 
where 
	\begin{equation}\label{eqn:local_matrix}
	\widetilde{\BF{A}}_n = 
	\BF{AX}^{\neq n} 
	\in\BB{R}^{I_1I_2\cdots I_N\times R_{n-1}J_nR_n}. 
	\end{equation}
We often cannot calculate the matrix $\widetilde{\BF{A}}_n$ by matrix-by-matrix multiplication \eqref{eqn:local_matrix}
for a large matrix $\BF{A}$ due to high storage and computational costs. Instead, by using the distributed representation \eqref{eqn_mat_vec_skron}, a matrix-by-vector product, $\widetilde{\BF{A}}_n \BF{w}^{(n)}$ for some vector $\BF{w}^{(n)}$, can be calculated
by recursive core contractions as follows. 

\begin{prop}
Let a vector $\BF{x}\in\BB{R}^{J_1\cdots J_N}$ and a matrix  $\BF{A}\in\BB{R}^{I_1\cdots I_N \times J_1\cdots J_N}$ 
be represented by vector TT and matrix TT decompositions
with block matrices $\widetilde{\BF{X}}^{(n)}$ and $\widetilde{\BF{A}}^{(n)}$, 
respectively. 
For a fixed $n=1,\ldots,N$, let $\widetilde{\BF{A}}_n$ be the matrix defined by \eqref{eqn:local_matrix}. 
Then, for any vector $\BF{w}^{(n)} \in\BB{R}^{R_{n-1} J_n R_n}$, 
	$$
	\widetilde{\BF{A}}_n \BF{w}^{(n)} = 
	\widetilde{\BF{Z}}^{(1)}
	\skron \cdots  \skron 
	\widetilde{\BF{Z}}^{(N)}
	\in\BB{R}^{I_1I_2\cdots I_N}, 
	$$
with block matrices
	$$
	\widetilde{\BF{Z}}^{(m)} = 
	\widetilde{\BF{A}}^{(m)} \modcon \widetilde{\BF{X}}^{(m)}
	\in\BB{R}^{R^A_{m-1}R_{m-1}I_m \times R^A_mR_m}, 
	\quad m=1,\ldots,n-1,n+1,\ldots,N,
	$$	
	$$
	\widetilde{\BF{Z}}^{(n)} = 
	\widetilde{\BF{A}}^{(n)} \modcon \widetilde{\BF{W}}^{(n)}
	\in\BB{R}^{R^A_{n-1}R_{n-1}I_n\times R^A_nR_n},
	$$	
where $\ten{W}^{(n)}\in\BB{R}^{R_{n-1}\times J_n\times R_n}$ is 
the 3rd-order tensor such that $\BF{w}^{(n)} = \text{vec}(\ten{W}^{(n)})$, and  $\widetilde{\BF{W}}^{(n)} = [\ten{W}^{(n)}(r_{n-1}, : , r_n)] \in\BB{R}^{R_{n-1}J_n\times R_n}$ is the block matrix partitioned with the fiber vectors $\ten{W}^{(n)}(r_{n-1}, : , r_n) \in\BB{R}^{J_n}$. 
\end{prop}
\begin{proof}
Let $\ten{X}^{(n)} \in\BB{R}^{R_{n-1}\times J_n\times R_n}$ denote 
the TT-cores corresponding to the block matrices 
$\widetilde{\BF{X}}^{(n)} \in\BB{R}^{R_{n-1}J_n\times R_n}$. 
Let $\ten{W}$ be an $N$th-order tensor defined by
	$\ten{W} = \ten{X}^{(1)}\modcon\cdots\modcon \ten{X}^{(n-1)}
		\modcon \ten{W}^{(n)} \modcon \ten{X}^{(n+1)}\modcon
		\cdots\modcon\ten{X}^{(N)}, 
	$ 
then
	$$
	\BF{w}\equiv \text{vec}\left(\ten{W}\right) = \BF{X}^{\neq n} \BF{w}^{(n)}. 
	$$
As a result, we have 
	$$
	\widetilde{\BF{A}}_n \BF{w}^{(n)}
	= \BF{A}\BF{X}^{\neq n}\BF{w}^{(n)}
	= \BF{Aw}. 
	$$
Note that $\BF{w}=\text{vec}(\ten{W})$ can be represented by vector TT decomposition 
with block matrices $\widetilde{\BF{X}}^{(1)},\ldots,\widetilde{\BF{W}}^{(n)},\ldots,
\widetilde{\BF{X}}^{(N)}$. The result follows from the expression \eqref{eqn_mat_vec_skron}. 
\end{proof}

Figure \ref{Fig:loc_lin_op} illustrates the tensor network diagram
for the product $\widetilde{\BF{A}}_n \BF{x}^{(n)}$. For each 
$k=1,\ldots,n-1,n+1,\ldots,N,$ the node for the TT-core $\ten{A}^{(k)}$
is connected to the node for $\ten{X}^{(k)}$, which is 
represented as $\widetilde{\BF{A}}^{(k)} \modcon \widetilde{\BF{X}}^{(k)}$
in \eqref{eqn_mat_vec_skron}. 

\begin{figure}
\centering
\includegraphics[width=7.5cm]{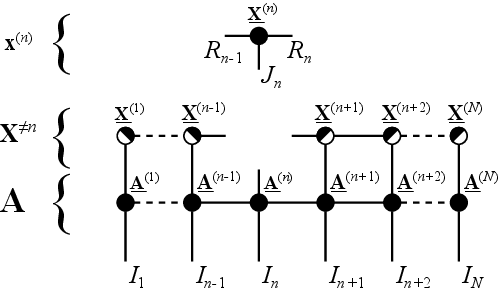}
\caption{\label{Fig:loc_lin_op}
Tensor network diagram for the matrix-by-vector product
$\widetilde{\BF{A}}_n\BF{x}^{(n)}$ represented by 
the TT decomposition \eqref{eqn_mat_vec_skron}, 
where $\BF{x}^{(n)}=\text{vec}(\ten{X}^{(n)})$. 
}
\end{figure}

\subsection{Quadratic form}

The quadratic form $\BF{x}^\Trps\BF{Ax}$ 
for a symmetric and very large-scale 
matrix $\BF{A}\in\BB{R}^{I_1I_2\cdots I_N\times I_1I_2\cdots I_N}$
can be represented by TT decomposition as follows. 
Let $\BF{x} \in\BB{R}^{I_1I_2\cdots I_N}$ and $\BF{A} \in\BB{R}^{I_1I_2\cdots I_N\times I_1I_2\cdots I_N}$ be a vector and a matrix
represented by vector TT \eqref{eqn_TT_vector_skron} and 
matrix TT \eqref{eqn_TT_matrix_skron} decompositions, i.e., 
	\begin{equation*}
	\begin{split}
	\BF{x} &= \widetilde{\BF{X}}^{(1)} \skron \cdots \skron \widetilde{\BF{X}}^{(N)}, \\
	\BF{A} &= \widetilde{\BF{A}}^{(1)}  \skron \cdots  \skron \widetilde{\BF{A}}^{(N)},
	\end{split}
	\end{equation*}
with TT-ranks $\{R^X_n\}$ and $\{R^A_n\}$, respectively. 
The quadratic form can be represented as strong Kronecker products of block matrices
	\begin{equation} \label{eqn_qform_skron}
	\BF{x}^\Trps\BF{Ax} = \BF{x} \modcon \BF{A} \modcon \BF{x} = 
	\left(\widetilde{\BF{X}}^{(1)} \modcon \widetilde{\BF{A}}^{(1)} \modcon \widetilde{\BF{X}}^{(1)} \right) \skron \cdots  \skron \left(\widetilde{\BF{X}}^{(N)} \modcon \widetilde{\BF{A}}^{(N)} \modcon \widetilde{\BF{X}}^{(N)} \right). 
	\end{equation}
That is, the $n$th block matrix in \eqref{eqn_qform_skron} is expressed by 
contracted product of the corresponding block matrices. It can be re-written as
	$$
	\widetilde{\BF{X}}^{(n)} \modcon \widetilde{\BF{A}}^{(n)} \modcon \widetilde{\BF{X}}^{(n)}
	= \left[ z^{(n)}_{s_{n-1},s_n} \right] \in \BB{R}^{R^A_{n-1}R_{n-1}^2 \times R^A_nR_n^2}, 
	$$
where each block is a scalar
	\begin{equation} \label{eqn_qform_fiber}
	z^{(n)}_{s_{n-1},s_n} = \BF{x}^{(n)\Trps}_{r_{n-1}',r_n'} \BF{A}^{(n)}_{r^A_{n-1},r^A_n} \BF{x}^{(n)}_{r_{n-1},r_n}
	\in\BB{R}, 
	\quad 
	s_{n-1}=\overline{r_{n-1}' r^A_{n-1}r_{n-1}},\, s_n=\overline{r_n' r^A_nr_n},	
	\end{equation}
for all $s_{n-1}=1,\ldots,R^A_{n-1}R_{n-1}^2$, 
$s_{n}=1,\ldots,R^A_{n}R_{n}^2$, $n=1,\ldots,N$. 

Since each block of the block matrices in \eqref{eqn_qform_skron} 
is a scalar, the quadratic form can be re-written as product of matrices as 
	$$
	\BF{x}^\Trps \BF{Ax} = \BF{Z}^{(1)} \cdots \BF{Z}^{(N)}, 
	$$
where 
	\begin{equation*} 
	\BF{Z}^{(n)} = \left[ z^{(n)}_{s_{n-1},s_n} \right] =
	\sum_{i_n=1}^{I_n} 
	\sum_{j_n=1}^{I_n} 
	\BF{X}^{(n)}_{i_n} \otimes 
	\BF{A}^{(n)}_{i_n,j_n}\otimes \BF{X}^{(n)}_{j_n},
	\quad n=1,\ldots,N, 
	\end{equation*}
where $\BF{X}^{(n)}_{i_n} = \ten{X}^{(n)}(:,i_n,:)$ and 
$\BF{A}^{(n)}_{i_n,j_n} = \ten{A}^{(n)}(:,i_n,j_n,:)$ are slice matrices of TT-cores, and 
$\BF{Z}^{(1)}\in\BB{R}^{1\times R^A_1 R_1^2}$ and 
$\BF{Z}^{(N)}\in\BB{R}^{R^A_{N-1}R_{N-1}^2\times 1}$ are row and 
column vectors, respectively.


On the other hand, recall that
	$$
	\BF{x} = \text{vec}\left( \ten{X} \right) = \BF{X}^{\neq n}\BF{x}^{(n)}, 
	$$
with $\BF{x}^{(n)} = \text{vec}(\ten{X}^{(n)})$, so the quadratic form $\BF{x}^\Trps \BF{Ax}$ reduces to 
	$$
	\BF{x}^\Trps \BF{Ax} 
	=
	\BF{x}^{(n)\Trps}
	(\BF{X}^{\neq n})^{\Trps}\BF{AX}^{\neq n}\BF{x}^{(n)}
	\equiv 
	\BF{x}^{(n)\Trps}  \overline{\BF{A}}_{n}\BF{x}^{(n)},
	$$ 
where the matrix
	\begin{equation}\label{eqn:local_form}
	\overline{\BF{A}}_{n} = 
	(\BF{X}^{\neq n})^{\Trps}\BF{AX}^{\neq n}
	\in\BB{R}^{R_{n-1}I_nR_n\times R_{n-1}I_nR_n}
	\end{equation}
is a much smaller matrix than $\BF{A}$ when TT-ranks $R_{n-1}$ and $R_n$
are sufficiently small. Since $\overline{\BF{A}}_{n}$ often cannot be 
calculated by matrix-by-matrix multiplication for a large matrix $\BF{A}$, 
we calculate it iteratively by recursive core contractions 
based on the distributed representation \eqref{eqn_qform_skron} 
as follows. 

\begin{prop}
Let a vector $\BF{x}\in\BB{R}^{I_1\cdots I_N}$ and a matrix  $\BF{A}\in\BB{R}^{I_1\cdots I_N \times I_1\cdots I_N}$ 
be represented by vector TT and matrix TT decompositions
with block matrices $\widetilde{\BF{X}}^{(n)}$ and $\widetilde{\BF{A}}^{(n)}$, 
respectively. 
For a fixed $n=1,\ldots,N$, let $\overline{\BF{A}}_n$ be the matrix defined by \eqref{eqn:local_form}. 
Then, for any vector $\BF{y}^{(n)}, \BF{w}^{(n)} \in\BB{R}^{R_{n-1} I_n R_n}$, 
	$$
	\BF{y}^{(n)\Trps} \overline{\BF{A}}_n \BF{w}^{(n)} = 
	\widetilde{\BF{Z}}^{(1)} \skron\cdots\skron \widetilde{\BF{Z}}^{(N)}
	\in\BB{R}^{R_{n-1}I_nR_n}, 
	$$
with block matrices
	$$
	\widetilde{\BF{Z}}^{(m)} = 
	\widetilde{\BF{X}}^{(m)} \modcon \widetilde{\BF{A}}^{(m)} \modcon \widetilde{\BF{X}}^{(m)}
	\in\BB{R}^{R^A_{m-1}R^2_{m-1}\times R^A_mR^2_m}, 
	\quad m=1,\ldots,n-1,n+1,\ldots,N,
	$$	
	$$
	\widetilde{\BF{Z}}^{(n)} = 
	\widetilde{\BF{Y}}^{(n)} \modcon
	\widetilde{\BF{A}}^{(n)} \modcon \widetilde{\BF{W}}^{(n)}
	\in\BB{R}^{R^A_{n-1}R^2_{n-1}I_n\times R^A_nR^2_n},
	$$	
where $\ten{Y}^{(n)}, \ten{W}^{(n)}\in\BB{R}^{R_{n-1}\times I_n\times R_n}$ are
the 3rd-order tensors such that $\BF{y}^{(n)} = \text{vec}(\ten{Y}^{(n)})$, $\BF{w}^{(n)} = \text{vec}(\ten{W}^{(n)})$, and  $\widetilde{\BF{Y}}^{(n)} = [\ten{Y}^{(n)}(r_{n-1}, : , r_n)]$, $\widetilde{\BF{W}}^{(n)} = [\ten{W}^{(n)}(r_{n-1}, : , r_n)] \in\BB{R}^{R_{n-1}I_n\times R_n}$ are the block matrices partitioned with the fiber vectors $\ten{Y}^{(n)}(r_{n-1}, : , r_n)$, $\ten{W}^{(n)}(r_{n-1}, : , r_n) \in\BB{R}^{I_n}$. 

\end{prop}
\begin{proof}
Let $\ten{X}^{(n)} \in\BB{R}^{R_{n-1}\times I_n\times R_n}$ denote 
the TT-cores corresponding to the block matrices 
$\widetilde{\BF{X}}^{(n)} \in\BB{R}^{R_{n-1}I_n\times R_n}$. 
Let $\ten{Y}$, $\ten{W}$ be $N$th-order tensors defined by
	$$\ten{Y} = \ten{X}^{(1)}\modcon\cdots\modcon \ten{X}^{(n-1)}
		\modcon \ten{Y}^{(n)} \modcon \ten{X}^{(n+1)}\modcon
		\cdots\modcon\ten{X}^{(N)}, 
	$$ 
	$$\ten{W} = \ten{X}^{(1)}\modcon\cdots\modcon \ten{X}^{(n-1)}
		\modcon \ten{W}^{(n)} \modcon \ten{X}^{(n+1)}\modcon
		\cdots\modcon\ten{X}^{(N)}, 
	$$ 
then
	$$
	\BF{y}\equiv \text{vec}\left(\ten{Y}\right) = \BF{X}^{\neq n} \BF{y}^{(n)}, 
	\quad
	\BF{w}\equiv \text{vec}\left(\ten{W}\right) = \BF{X}^{\neq n} \BF{w}^{(n)}. 
	$$
As a result, we have 
	$$
	\BF{y}^{(n)\Trps} \overline{\BF{A}}_n \BF{w}^{(n)}
	= \BF{y}^{(n)\Trps} (\BF{X}^{\neq n})^\Trps\BF{A}\BF{X}^{\neq n} \BF{w}^{(n)}
	= \BF{y}^\Trps\BF{Aw}. 
	$$
Since $\BF{y}=\text{vec}(\ten{Y})$ (resp. $\BF{w}=\text{vec}(\ten{W})$) can be represented by vector TT decomposition 
with block matrices $\widetilde{\BF{X}}^{(1)},\ldots,\widetilde{\BF{Y}}^{(n)},\ldots,
\widetilde{\BF{X}}^{(N)}$ (resp. $\widetilde{\BF{X}}^{(1)},\ldots,\widetilde{\BF{W}}^{(n)},\ldots, \widetilde{\BF{X}}^{(N)}$), the result follows from the expression \eqref{eqn_qform_skron}. 
\end{proof}

Figure \ref{Fig:loc_quad_op} illustrates the tensor network diagram 
for the quadratic form $\BF{x}^{(n)\Trps} \overline{\BF{A}}_n \BF{x}^{(n)}$. 
It is clear that each node for core tensor $\ten{A}^{(k)},k=1,\ldots,n-1,n+1,\ldots,N,$ 
is connected to the node for core tensor $\ten{X}^{(k)}$, 
which is represented as 
$\widetilde{\BF{X}}^{(k)} \modcon \widetilde{\BF{A}}^{(k)} \modcon \widetilde{\BF{X}}^{(k)}$ in 
\eqref{eqn_qform_skron}. 

\begin{figure}
\centering
\includegraphics[width=7.5cm]{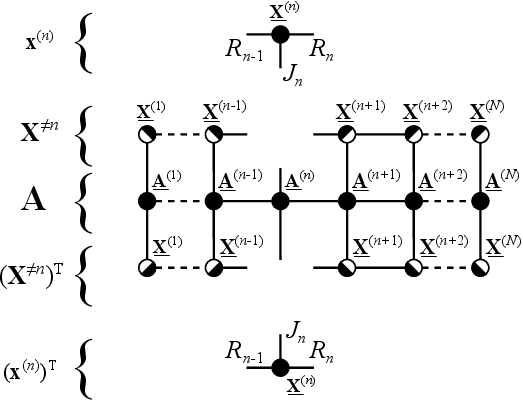}
\caption{\label{Fig:loc_quad_op}Tensor network diagram for the quadratic form
$\BF{x}^{(n)\Trps} \overline{\BF{A}}_n \BF{x}^{(n)}$
represented by the TT decomposition \eqref{eqn_qform_skron}, 
where $\BF{x}^{(n)}=\text{vec}(\ten{X}^{(n)})$. 
}
\end{figure}

%
%

\section{Discussion and conclusions}

In this paper, we proposed several extended mathematical operations on tensors
and developed their multilinear algebraic properties and their links with 
tensor network formats, especially TT decompositions. 
We generalized the standard matrix-based operations such as the 
Kronecker product, Hadamard product, and direct sum, and proposed tensor-based operations such as the partial trace and contracted product for block tensors. 
We have shown that the tensor-based operations are able to not only simplify traditional index notation for TT representations but also describe important basic operations 
which are very useful for computational algorithms using large-scale vectors, matrices, and higher-order tensors. 

The partial trace operator can be used for describing the tensor chain (TC) decomposition \cite{EspNarSch2012,Kho2012} simply by slightly modifying the suggested TT representations. Properties of TC decomposition should be more investigated 
in the future work. Moreover, the definitions and properties of partial Kronecker product, 
partial direct sum, and contracted product 
can also be generalized to any tensor network decompositions
such as hierarchical Tucker (HT) \cite{Gra2010a,Hac2012, HacKuhn2009}
and hybrid formats \cite{Gra2013,Kho2012}. 

The partial contracted products of either the left
or right core tensors of TT decomposition are matricized and 
used as a building block of the frame matrices. 
We have shown that the suggested tensor operations 
can be used to prove the orthonormality of the 
frame matrices, which have been proved 
only by using index notation in the literature. 
The developed relationships also play a key role in the 
alternating linear scheme (ALS)
and modified alternating linear scheme (MALS) 
algorithms \cite{Holtz2012} for reducing the large-scale 
optimizations to iterative smaller scale problems. 
Recent studies adjust the frame matrices in order to incorporate 
rank adaptivity and improve convergence for the ALS 
\cite{DolSav2014,KresSteinUsh2014}. 
In this work, we have derived explicit representations of 
the localized linear maps $\widetilde{\ten{A}}_n$
and $\overline{\ten{A}}_n$ by the proposed tensor operations, 
which are important for TT-based iterative methods for breaking the 
curse-of-dimensionality \cite{Cic2014a}, while the global convergence 
of the methods remains as a future work. 
In addition, it is important to keep the TT-ranks moderate
for a feasible computational cost, which is a crucial issue 
for real world applications of TT decompositions, see, e.g., \cite{Ver2014}.




\appendix

\end{document}